











\documentclass{amsart}
\usepackage{amssymb}
\usepackage{pb-diagram}
\usepackage{color}
\usepackage[normalem]{ulem}

\newtheorem{theorem}{Theorem}[section]

\newtheorem{claim}[theorem]{Claim}
\newtheorem{fact}[theorem]{Fact}
\newtheorem{cor}[theorem]{Corollary}

\newtheorem{lemma}[theorem]{Lemma}

\theoremstyle{definition}

\theoremstyle{remark}

\newtheorem{remark}[theorem]{Remark}

 \DeclareMathOperator{\fr}{fr}
 \DeclareMathOperator{\intr}{int}

\newcommand{\WM}{\widetilde{\cal M}}

\newcommand{\la}{\langle}
\newcommand{\ra}{\rangle}
\newcommand{\CM}{{\cal M}}

\newcommand{\sub}{\subseteq}


\newcommand{\dcl}{\operatorname{dcl}}

\newcommand{\cl}{\operatorname{cl}}

\newcommand{\cal}[1]{\ensuremath{\mathcal{#1}}}

\newcommand{\Lrarr}{\ensuremath{\Leftrightarrow}}

\newcommand{\res}{\ensuremath{\upharpoonright}}

\newcommand{\es}{\ensuremath{\emptyset}}


\newcommand{\sm}{\setminus}

\newcommand{\Z}{\mathbb{Z}}
\newcommand{\N}{\mathbb{N}}
\newcommand{\Q}{\mathbb{Q}}
\newcommand{\R}{\mathbb{R}}

\title[Small sets in Mann pairs]
{Small sets in Mann  pairs}


\subjclass[2010]{Primary 03C64,  Secondary 06F20}
\keywords{Mann pairs, elimination of imaginaries,  small sets}

\date{\today}

\begin{document}

\author {Pantelis  E. Eleftheriou}

\address{Department of Mathematics and Statistics, University of Konstanz, Box 216, 78457 Konstanz, Germany}

\email{panteleimon.eleftheriou@uni-konstanz.de}

\thanks{Research supported by an Independent Research Grant from the German Research Foundation (DFG) and a Zukunftskolleg Research Fellowship.}

\begin{abstract}
Let $\WM=\la \CM, G\ra$ be an expansion of a real closed field $\CM$ by a dense subgroup $G$ of $\la M^{>0}, \cdot\ra$ with the Mann property. We prove that the induced structure on $G$ by $\CM$ eliminates imaginaries. As a consequence, every small set $X$ definable in  $\CM$ can be definably embedded into some $G^l$, uniformly in parameters. These results are proved in a more general setting where $\WM=\la \CM, P\ra$ is an expansion of an o-minimal structure \cal M by a dense set $P\sub M$, satisfying three tameness conditions.
\end{abstract}

\maketitle

\section{Introduction}
This note is a natural extension of the work in \cite{el-Pind}. In that reference,  expansions $\WM=\la \CM, P\ra$ of an o-minimal structure $\CM$ by a dense predicate $P\sub M$ were studied, and under three tameness conditions, it was shown that the induced structure $P_{ind}$ on $P$ by $\CM$ eliminates imaginaries. The tameness conditions were verified for dense pairs of real closed fields, for expansions of $\cal M$ by an independent set $P$, and for expansions of a real closed field $\cal M$ by a dense subgroup $P$ of $\la M^{>0}, \cdot\ra$ with the Mann property (henceforth called \emph{Mann pairs}), assuming $P$ is divisible. As pointed out in \cite[Remark 4.10]{el-Pind}, without the divisibility assumption in the last example, the third tameness condition no longer holds, and in \cite[Question 4.11]{el-Pind} it was asked whether in that case $P_{ind}$ still eliminates imaginaries. In this note, we prove that it does. Indeed, we replace the third tameness condition by a weaker one, which we verify for arbitrary Mann pairs, and prove that together with the two other tameness conditions it implies elimination of imaginaries for $P_{ind}$.

Let us fix our setting. Throughout this text,  $\cal M=\la M, <, +, 0, \dots\ra$  denotes an o-minimal expansion of an ordered group with a distinguished positive element $1$. We denote by $\cal L$ its language, and by $\dcl$ the usual definable closure operator in \cal M.  An `$\cal L$-definable' set is a set definable in $\cal M$ with parameters. We write `$\cal L_A$-definable' to specify that those parameters come from $A\sub M$.
It is well-known that $\cal M$ admits definable Skolem functions and eliminates imaginaries (\cite[Chapter 6]{vdd-book}).

 Let $D, P\sub M$. The \emph{$D$-induced structure on $P$ by \cal M}, denoted by $P_{ind(D)}$, is a structure in the  language
$$\cal L_{ind(D)}=\{R_{\phi(x)}(x): \phi(x)\in \cal L_D\},$$
whose universe is $P$ and, for every tuple $a\sub P$,
 $$P_{ind(D)}\models R_\phi(a) \,\,\Lrarr\,\, \cal M\models \phi(a).$$
If $Q\sub P^n$, by a \emph{trace on $Q$} we mean a set of the form $Y\cap Q$, where $Y$ is $\cal L$-definable. We call $Y\cap P^n$ a \emph{full trace}.


\textbf{For the rest of this paper we fix some $P\sub M$ and denote $\widetilde{\cal M}=\la \cal M, P\ra$.}  We let $\cal L(P)$ denote the language of $\WM$; namely, the language $\cal L$ augmented by a unary predicate symbol $P$. We denote by $\dcl_{\cal L(P)}$ the definable closure operator in $\WM$.  Unless stated otherwise, by `($A$-)definable' we mean ($A$-)definable in $\widetilde{\cal M}$, where $A\sub M$. We  use the letter $D$ to denote an arbitrary, but not fixed, subset of $M$.\\

\noindent\textbf{Tameness Conditions} (for $\widetilde{\cal M}$ and $D$):

\begin{itemize}
  \item[(OP)] (Open definable sets are $\cal L$-definable.) For every set $A$ such that $A\setminus P$ is $\dcl$-independent over $P$, and for every $A$-definable set $V \subset M^n$, its topological closure $\overline V \subseteq M^{n}$ is $\cal L_A$-definable.\smallskip

  \item[(dcl)$_D$] Let $B, C\sub P$ and
$$A=\dcl(BD)\cap \dcl(CD)\cap P.$$
Then $$\dcl(AD)=\dcl(BD)\cap \dcl(CD).$$




\item[(ind)$_D$]     Let $X\sub P^n$ be definable in $P_{ind(D)}$. Then $X$ is a finite union of traces on sets which are $\es$-definable in $P_{ind(D)}$. That is, there are $\cal L$-definable sets $Y_1, \dots, Y_l\sub M^n$, and sets $Q_1, \dots, Q_l\sub P^k$ that are $\es$-definable in $P_{ind(D)}$,  such that
     $$X=\bigcup_i (Y_i \cap Q_i).$$





\end{itemize}


Conditions (OP) and (dcl)$_D$ are the same with those in \cite{el-Pind}, and are already known to hold for Mann pairs (\cite[Remark 4.11]{el-Pind}). Condition (ind)$_D$ is weaker than the corresponding one in \cite{el-Pind}, in three ways: (a) $X$ is now a \emph{finite union} of traces (instead of a single trace), (b) the traces are on \emph{subsets} of $P^n$ (instead of on the whole $P^n$), and (c) there is no control in parameters for the $Y_i$'s (although we achieve this in Corollary \ref{ind_D} below).  These differences result in several non-trivial complications in the proof of our main theorem, which are handled in Section \ref{sec-main}. For now, let us state the main theorem.

\begin{theorem}\label{main}
Assume \textup{(OP)}, \textup{(dcl)}$_D$ and \textup{(ind)$_D$}, and that $D$ is $\dcl$-independent over $P$. Then $P_{ind(D)}$ eliminates imaginaries.
\end{theorem}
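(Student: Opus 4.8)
The plan is to reduce elimination of imaginaries for $P_{ind(D)}$ to the following: for every equivalence relation $E$ on $P^n$ that is $\es$-definable in $P_{ind(D)}$ and every class $X=\bar a/E$, there is a tuple $\bar e$ from $P$ coding $X$, i.e. with $\dcl^{eq}(\bar e)=\dcl^{eq}(X)$ computed in $P_{ind(D)}^{eq}$; this is the standard reformulation of elimination of imaginaries. Work in a monster model and fix such $E$ and $X$. Since $E$ is $\es$-definable in $P_{ind(D)}$, the class $X$ is $\cal{L}(P)$-definable over $\bar aD$ in $\WM$; and as $D$ — hence $D\sm P$ — is $\dcl$-independent over $P$, \textup{(OP)} applies with parameter set $\bar aD$ and shows that $\overline X\sub M^n$ is $\cal{L}_{\bar aD}$-definable. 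I would then iterate the operation $Z\mapsto (\overline Z\cap P^n)\sm Z$ starting from $X$: each stage is again definable in $\WM$ over $\bar aD$ (the previous closures are $\cal{L}_{\bar aD}$-definable, so no new parameters enter), so \textup{(OP)} applies at every step, and — using \textup{(ind)}$_D$ together with Corollary \ref{ind_D} to control the trace structure — I expect the $\cal{L}$-dimension of the successive closures to strictly decrease, so the process stops after at most $n$ steps. Collecting the $\cal{L}$-canonical parameters (available since $\CM$ eliminates imaginaries) of these closures into a finite real tuple $\bar\gamma$, one obtains $\bar\gamma\in\dcl(\bar aD)$ and $X$ as a Boolean combination, intersected with $P^n$, of $\cal{L}_{\bar\gamma D}$-definable sets; in particular $X$ is $\WM$-definable over $\bar\gamma D$.

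The crucial observation is that every set in this construction depends only on the class $X$ and not on the representative $\bar a$: if $E(\bar a,\bar a')$ then $\bar a'/E=X$, so all the iterated closures are unchanged. Hence $\bar\gamma\in\dcl(\bar a'D)$ for \emph{every} $\bar a'\in X$, and $\bar\gamma$ is fixed by every automorphism of $\WM$ over $D$ that fixes $X$ setwise. Now I would bring $\bar\gamma$ inside $P$. Choose $\bar a_1'$ generic in $X$ over $\bar\gamma D$ and $\bar a_2'$ generic in $X$ over $\bar\gamma D\bar a_1'$; then $\bar a_1'$ and $\bar a_2'$ are $\dcl$-independent over $\bar\gamma D$, and the exchange property of the $\cal{L}$-pregeometry gives $\dcl(\bar a_1'D)\cap\dcl(\bar a_2'D)=\dcl(\bar\gamma D)$. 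Applying \textup{(dcl)}$_D$ with $B=\bar a_1'$, $C=\bar a_2'$ now yields $\dcl(\bar\gamma D)=\dcl(AD)$ with $A=\dcl(\bar\gamma D)\cap P\sub P$; by finite character of $\dcl$ there is a finite tuple $\bar e\sub A\sub P$ with $\bar\gamma\in\dcl(\bar eD)$.

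Finally, $\bar e$ is the required code. Since $X$ is $\cal{L}_{\bar\gamma D}$-definable and $\bar\gamma\in\dcl(\bar eD)$, the class $X$ is definable over $\bar eD$ in $\WM$, so $X\in\dcl^{eq}(\bar eD)$. Conversely, each entry of $\bar e$ lies in $A=\dcl(\bar\gamma D)\cap P$, hence is $\bar\gamma D$-definable; any automorphism of $\WM$ over $D$ fixing $X$ setwise fixes $\overline X$ and its iterated closures, hence fixes $\bar\gamma$ pointwise, hence fixes $\bar e$ pointwise — so $\bar e\in\dcl^{eq}(XD)$. Thus $\dcl^{eq}(\bar eD)=\dcl^{eq}(XD)$. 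Transferring this to $P_{ind(D)}$, whose language already absorbs $D$ and in which $P$ is stably embedded (so that automorphisms of $P_{ind(D)}$ lift to automorphisms of $\WM$ fixing $D$ and $P$ setwise), the tuple $\bar e\in P^{<\omega}$ codes $X$ in $P_{ind(D)}$; this gives the theorem.

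I expect the main obstacle to be the first paragraph — producing the canonical tuple $\bar\gamma$ — under the weakened condition \textup{(ind)}$_D$. Because a set definable in $P_{ind(D)}$ is now only a \emph{finite union} of traces, on \emph{subsets} $Q_i\sub P^n$ rather than on all of $P^n$, and with \emph{a priori uncontrolled} parameters in the $\cal{L}$-definable sets $Y_i$, one must first prove Corollary \ref{ind_D} to pull the parameters of the $Y_i$'s down, then pass to the finitely many atoms determined by $Q_1,\dots,Q_l$ (which are $\es$-definable in $P_{ind(D)}$, hence respected by all automorphisms) and argue on each atom separately, and finally symmetrize over the atoms and over the ambiguity in the representation $\bigcup_i(Y_i\cap Q_i)$ so that $\bar\gamma$ genuinely depends only on $X$. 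The point that on each atom the $\cal{L}$-dimension of the iterated closures strictly drops — which is what makes the construction terminate and recover $X$ — is the technical heart; once $\bar\gamma$ is available, the reduction to equivalence relations, the exchange computation, and the appeal to \textup{(dcl)}$_D$ are routine.
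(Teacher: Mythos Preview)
Your overall strategy --- produce an $\cal L$-canonical parameter $\bar\gamma$ for $X$, then push it into $P$ via \textup{(dcl)}$_D$ --- is reasonable, but the construction of $\bar\gamma$ in your first paragraph does not work as stated. The iteration $Z\mapsto(\overline Z\cap P^n)\sm Z$ need not have strictly decreasing closure dimension under the weakened \textup{(ind)}$_D$. Here is a concrete counterexample in the Mann pair $\la\R,2^{\Z}3^{\Z}\ra$: let $Q_1=G^{[2]}$ and $Q_2=G\sm Q_1$ (both dense in $\R^{>0}$ and $\es$-definable in $P_{ind(\es)}$), and take $X=\big((0,1)\cap Q_1\big)\cup\big((0,2)\cap Q_2\big)$. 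Then $\overline X=[0,2]$, and one computes $X_1=(\overline X\cap G)\sm X=([1,2)\cap Q_1)\cup\{2\}$, so $\overline{X_1}=[1,2]$ still has dimension $1$. Iterating, $X_2=(1,2)\cap Q_2$ and then $X_3=X_1$: the process is eventually periodic and never terminates. The difficulty is precisely that on the region $[1,2]$ the set $X$ meets only one of the two $\es$-definable pieces $Q_1,Q_2$, and passing to closures cannot detect which one.

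The paper avoids this by taking a different route. Rather than coding each class directly, it verifies Pillay's criterion (Fact~\ref{criterion}): if $X\sub P^n$ is both $B$-definable and $C$-definable in $P_{ind(D)}$, then it is definable over $cl_D(B)\cap cl_D(C)$. The key step (Lemma~\ref{general1}) is to write $X=\bigcup_i W_i\cap S_i$ with each $W_i$ \emph{simultaneously} $\cal L_{BD}$- and $\cal L_{CD}$-definable and each $S_i$ $\es$-definable in $P_{ind(D)}$. To do this one fixes \emph{any} disjoint decomposition $X=\bigcup_i Z_i\cap R_i$ coming from \textup{(ind)}$_D$, and then replaces each $Z_i$ by
\[
T_i=\{x\in\overline X:\text{ some relatively open }V\sub\overline X\text{ around }x\text{ has }V\cap R_i\sub X\}.
\]
The point is that $T_i$ is built only from $\overline X$, $X$, and the $\es$-definable $R_i$ --- the arbitrary $\cal L$-set $Z_i$ has been eliminated --- so $T_i$ is both $BD$- and $CD$-definable; being relatively open in $\overline X$ it is $\cal L$-definable by \textup{(OP)}, hence $\cal L_{BD}$- and $\cal L_{CD}$-definable by Lemma~\ref{Ldef}. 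One then checks $T_i\cap R_i\sub X$ and $\dim\overline{X\sm\bigcup_i(T_i\cap R_i)}<\dim\overline X$, and inducts. Your final paragraph gestures at working ``on each atom separately'' and ``symmetrizing'', but it is exactly this $T_i$ construction that is missing; without it there is no mechanism making the dimension drop while keeping the $\cal L$-pieces canonical. Once Lemma~\ref{general1} is available, Pillay's o-minimal result plus \textup{(dcl)}$_D$ finish immediately (Lemma~\ref{pillay2}), with no need for generic representatives, the exchange computation, or the stable-embeddedness transfer in your third paragraph.
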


Condition  (ind)$_D$ is modelled  after the current literature on Mann pairs, which we now explain. Assume $\cal M=\la M, <, +, \cdot, 0, 1\ra$ is a real closed field, and $G$ a dense subgroup of $\la M^{>0}, \cdot\ra$. For every $a_1, \dots, a_r\in M$, a solution $(q_1, \dots, q_r)$ to the equation
$$a_1 x_1 + \dots +a_r x_r=1$$
is called \emph{non-degenerate} if for every non-empty $I\sub \{1, \dots, r\}$, $\sum_{i\in I} a_i q_i\ne 0$. We say that $G$  has the \emph{Mann property}, if  for every $a_1, \dots, a_r\in M$, the above equation has only finitely many non-degenerate solutions  $(q_1, \dots, q_r)$ in $G^r$.\footnote{
The original definition only involved equations with coefficients $a_i$ in the prime field of $\cal M$, but, by \cite[Proposition 5.6]{dg}, the two definitions are equivalent.}
Let us call such a pair $\la \CM, G\ra$ a \emph{Mann pair}. Examples of Mann pairs include all multiplicative subgroups of $\la \R_{>0}, \cdot\ra$ of finite rank (\cite{ess}), such as $2^\Q$ and $2^\Z 3^\Z$. Van den Dries - G\"unaydin  \cite[Theorem 7.2]{dg} showed that in a Mann pair, where moreover $G$ is divisible (such as $2^\Q$), every definable set $X\sub G^n$ is a full trace; in particular, (ind)$_D$ from \cite{el-Pind} holds. Without the divisibility assumption, however, this is no longer true. Consider for example $G=2^\Z 3^\Z$ and let $X$ be the subgroup of $G$ consisting of all elements divisible by $2$. That is, $X=\{2^{2m} 3^{2n} : m,n\in \Z\}$. This set is clearly dense and co-dense in $\R$, and cannot be a trace on any subset of $G$.

 A substitute to \cite[Theorem 7.2]{dg} was proved by Berenstein-Ealy-G\"unaydin \cite{beg}, as follows. Consider, for every $d\in \N$, the set $G^{[d]}$ of all elements of $G$ divisible by $d$,
$$G^{[d]}=\{x\in G: \exists y\in G, \, x=y^d\}.$$
Under the mild assumption that for every prime $p$, $G^{[p]}$ has finite index in $G$, \cite[Theorem 7.5]{dg} provides a near model completeness result, which is then used in \cite{beg} to prove that every definable set $X\sub P^n$ is a finite union of traces on $\es$-definable subsets of $P^n$ (Fact \ref{fact-beg} below). 
Note this mild assumption is still satisfied by all multiplicative subgroups of $\la \R_{>0}, \cdot\ra$ of finite rank (as noted in \cite{gun-thesis}).

\begin{cor}\label{cor-mann}
Assume $\WM=\la \CM, G\ra$ is a Mann pair, such that for every prime $p$, $G^{[p]}$ has finite index in $G$. Let  $D\sub M$ be $\dcl$-independent over $P$. Then  \textup{(OP)}, \textup{(dcl)$_D$} and \textup{(ind)$_D$} hold. In particular, $P_{ind(D)}$ eliminates imaginaries.
\end{cor}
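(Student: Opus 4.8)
The plan is to verify the three tameness conditions for an arbitrary Mann pair $\WM=\la\CM,G\ra$ satisfying the finite-index hypothesis, with $P=G$ and $D\sub M$ $\dcl$-independent over $P$, and then to invoke Theorem \ref{main}. Conditions (OP) and (dcl)$_D$ are literally the conditions of \cite{el-Pind}; although in that reference the tameness conditions for Mann pairs were discussed under a divisibility hypothesis, that hypothesis was used only for the stronger form of the third condition, and (OP) and (dcl)$_D$ are recorded to hold for every Mann pair in \cite[Remark 4.11]{el-Pind}. So for these two I would simply cite \cite[Remark 4.11]{el-Pind}, observing that the assumption that $D$ is $\dcl$-independent over $P$ is exactly what makes (dcl)$_D$ and the relevant clause of (OP) applicable.

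It remains to establish (ind)$_D$. Let $X\sub P^n$ be definable in $P_{ind(D)}$. Unwinding the definition of $\cal L_{ind(D)}$, one sees that $X$ is definable in $\WM$ over $D$ together with finitely many parameters from $P$: the quantifiers of a defining $\cal L_{ind(D)}$-formula are relativized to the predicate $P$, and its atomic subformulas $R_\phi$ pull back to $\cal L_D$-formulas. I would now invoke Fact \ref{fact-beg} --- which is exactly where the finite-index hypothesis is used, since it underlies the near model completeness of \cite[Theorem 7.5]{dg} on which \cite{beg} relies. Applied to $X$, Fact \ref{fact-beg} gives a presentation $X=\bigcup_{i}(Y_i\cap Q_i)$ with each $Y_i$ an $\cal L$-definable subset of $M^n$ and each $Q_i$ a set that is $\es$-definable in $P_{ind}$. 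Since $\cal L\sub\cal L_D$, every predicate of $\cal L_{ind}$ is a predicate of $\cal L_{ind(D)}$, so each $Q_i$ is a fortiori $\es$-definable in $P_{ind(D)}$; thus (ind)$_D$ holds. Theorem \ref{main} then applies and yields elimination of imaginaries for $P_{ind(D)}$.

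The one point requiring care --- and the only place where this corollary is more than bookkeeping --- is the parameter management hidden inside Fact \ref{fact-beg}. One must check that the ``small part'' $Q_i$ of the normal form produced by \cite{beg} is genuinely parameter-free over the induced structure $P_{ind}$, so that the passage from $P_{ind}$ to $P_{ind(D)}$, and the presence of the $D$- and $P$-parameters in $X$, cost nothing; the parameters of $X$ should instead be absorbed harmlessly into the $\cal L$-definable parts $Y_i$, using that $\CM$ eliminates imaginaries and has definable Skolem functions so that $\cal L$-definability is preserved under the relevant specializations. Provided Fact \ref{fact-beg} is stated in that form, the corollary follows immediately; I would not expect any further obstacle.
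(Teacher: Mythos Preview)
Your proposal is correct and follows essentially the same route as the paper: cite \cite[Remark 4.11]{el-Pind} for (OP) and (dcl)$_D$, invoke Fact~\ref{fact-beg} for (ind)$_D$, and conclude via Theorem~\ref{main}. Your unpacking of why a set definable in $P_{ind(D)}$ is definable in $\WM$, and why $\emptyset$-definability in $P_{ind(\emptyset)}$ passes to $P_{ind(D)}$, is accurate and just makes explicit what the paper's one-line proof leaves implicit; your caution about the parameter-freeness of the $Q_i$ is exactly the content of the paper's proof of Fact~\ref{fact-beg}.
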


Observe that Corollary \ref{cor-mann} stands in contrast to the current literature, as it is known that in Mann pairs both existence of definable Skolem functions and elimination of imaginaries (for $\WM$) fail (\cite{dms}).
Note also that the assumption of $D$ being $\dcl$-independent over $P$ is necessary; namely, without it, $P_{ind(D)}$ need not eliminate imaginaries (\cite[Example 5.1]{el-Pind}).

Theorem \ref{main} has the following important consequence. Recall from \cite{vdd-dense} that a set $X\sub M^n$ is called \emph{$P$-bound over $A$} if there is an $\cal L_A$-definable function $h: M^m \to M^n$ such that $X \subseteq h(P^m)$. The recent work in \cite{egh} provides an analysis for all definable sets in terms of `$\cal L$-definable-like' and $P$-bound sets. Using Theorem \ref{main}, we further reduce the study of $P$-bound sets to that of definable subsets of $P^l$.

\begin{cor}\label{embed}
Assume \textup{(OP)}, \textup{(dcl)}$_D$ and \textup{(ind)$_D$} hold for every $D\sub M$ which is $\dcl$-independent over $P$. Let $X\sub M^n$ be an $A$-definable set. If $X$ is $P$-bound over $A$,  then there is an $A\cup P$-definable injective map $\tau:X\to P^l$. If $A$ itself is $\dcl$-independent over $P$, then the extra parameters from $P$ can be omitted.
\end{cor}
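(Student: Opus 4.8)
The plan is to combine Theorem \ref{main} with the definition of $P$-boundness. Suppose $X \sub M^n$ is $A$-definable and $P$-bound over $A$, so there is an $\cal L_A$-definable function $h: M^m \to M^n$ with $X \sub h(P^m)$. Replacing $m$ if necessary and composing with a projection, we may assume $h$ restricts to a surjection $P^m \to X$; pulling back $X$ along $h$, consider the equivalence relation $E$ on the $\cal L_A$-definable set $h^{-1}(X) \cap P^m$ given by $\bar u \mathrel{E} \bar v \Lrarr h(\bar u) = h(\bar v)$. Since $h$ is $\cal L_A$-definable, $E$ is an $A$-definable equivalence relation on a subset of $P^m$, hence is coded by a formula in $P_{ind(A)}$; in other words, $E$ is an imaginary in the structure $P_{ind(A)}$ (after enlarging $A$ by the finitely many parameters, which stay $\dcl$-independent over $P$ when $A$ is). Here we use the hypothesis that (OP), (dcl)$_D$ and (ind)$_D$ hold for every $D$ that is $\dcl$-independent over $P$, applied with $D = A$ (respectively $D = \es$ in the last clause).

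Now apply Theorem \ref{main}: $P_{ind(A)}$ eliminates imaginaries. Therefore the quotient $(h^{-1}(X)\cap P^m)/E$ is, up to definable bijection in $P_{ind(A)}$, a definable subset $Z \sub P^l$ for some $l$, via some $A$-definable (in $\WM$) bijection $h^{-1}(X)\cap P^m \to Z$ that factors through $E$. Composing the section of $h$ (pick, for each $x \in X$, its $E$-class, which is sent to a single point of $Z$) with this bijection yields an $A$-definable map $\tau_0: X \to P^l$ which is injective by construction, since $\bar u \mathrel{E} \bar v$ exactly when $h(\bar u)=h(\bar v)$. The map $\tau_0$ is a priori definable in $\WM$ using parameters from $A$; but being a map into $P^l$ whose graph is definable, standard quantifier manipulation in the pair (choosing, via definable Skolem functions in $\cal M$, witnesses for the existential $P$-quantifiers) shows its graph is defined by an $\cal L(P)$-formula over $A \cup P$, giving the desired $\tau: X \to P^l$ definable over $A \cup P$. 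When $A$ itself is $\dcl$-independent over $P$, one runs the same argument entirely inside $P_{ind(A)}$ and the resulting code lies in $P$, so no extra parameters from $P$ beyond those already needed are introduced, and in fact the construction can be arranged over $A$ alone.

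The main obstacle I anticipate is the bookkeeping needed to pass between three different notions of definability: definability in $\WM$ over $A$, definability in the induced structure $P_{ind(A)}$, and $\cal L(P)$-definability over $A \cup P$. Concretely, one must check (i) that the graph of $h\res P^m$ and the relation $E$ genuinely give rise to sets definable in $P_{ind(A)}$ — this is essentially the definition of the induced structure, but one should be careful that $h^{-1}(X)\cap P^m$ is a subset of a Cartesian power of $P$ and that restricting $\cal L_A$-formulas to $P$ is exactly the operation $P_{ind(A)}$ encodes; and (ii) that the imaginary-elimination bijection, which lives in $P_{ind(A)}$, can be lifted to an honest $\WM$-definable map on $X$ and then have its parameters pushed into $A\cup P$. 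Step (ii) is where the clause "If $A$ itself is $\dcl$-independent over $P$, then the extra parameters from $P$ can be omitted" requires the most care: one needs that the code produced by elimination of imaginaries in $P_{ind(A)}$ is already $A$-definable (not merely $A\cup P$-definable in $\WM$), which follows because elimination of imaginaries in $P_{ind(A)}$ operates with $\cal L_{ind(A)}$-formulas, i.e.\ with parameters from $A$ only.
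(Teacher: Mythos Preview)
Your argument for the case where $A$ is $\dcl$-independent over $P$ is essentially the paper's (which simply cites \cite[Theorem B]{el-Pind}): pull $X$ back to $P^m$ along the $\cal L_A$-definable $h$, quotient by the fibre relation, and use elimination of imaginaries in $P_{ind(A)}$ (Theorem~\ref{main} with $D=A$) to code the quotient in some $P^l$. One simplification over what you wrote: you need not worry whether $h^{-1}(X)\cap P^m$ is definable in $P_{ind(A)}$; just take $E$ to be $h(u)=h(v)$ on all of $P^m$, which is $\es$-definable in $P_{ind(A)}$ since $h$ is $\cal L_A$-definable, apply EI to get a $\es$-definable $\sigma:P^m\to P^l$, and set $\tau(x)=\sigma(u)$ for any $u\in P^m$ with $h(u)=x$. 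This $\tau$ is then $A$-definable in $\WM$ directly, with no need for the ``standard quantifier manipulation'' you describe.

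The genuine gap is the general case. You write ``applied with $D=A$'', but Theorem~\ref{main} requires $D$ to be $\dcl$-independent over $P$; when $A$ is not, you simply cannot invoke EI for $P_{ind(A)}$, and the rest of your paragraph (producing $\tau_0$ over $A$ and then pushing parameters into $A\cup P$) never gets off the ground. Your parenthetical ``respectively $D=\es$ in the last clause'' also has the two cases reversed: the last clause is precisely the case where $A$ \emph{is} $\dcl$-independent over $P$, and there one takes $D=A$. The paper handles arbitrary $A$ by a separate reduction, Lemma~\ref{preserve}: name every element of $P$ as a constant, obtaining $\cal M'$ and $\WM'=\la\cal M',P\ra$, and check that (ind)$_D$ persists in $\WM'$. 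Any $A$-definable set in $\WM$ is then $A_0$-definable in $\WM'$ for some $A_0\sub A$ that is $\dcl$-independent over $P$ (the elements of $A$ lying in $\dcl(P)$ have become $\es$-definable). Applying the special case inside $\WM'$ with $D=A_0$ yields $\tau$ definable over $A_0$ in $\WM'$, i.e.\ over $A\cup P$ in $\WM$. Without this naming-of-$P$ step (or the equivalent move of extracting $D\sub A$ independent over $P$ and working in $P_{ind(D)}$ with parameters from $P$), the general statement is not established.
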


Note that the assumption of Corollary \ref{embed} holds for $\WM$ as in Corollary \ref{cor-mann}. Note also that allowing parameters from $P$ is standard practice when studying definability in this context; see for example \cite[Lemma 2.5 and Corollary 3.26]{egh}.

$ $\\
\noindent\emph{Structure of the paper.} In Section \ref{prelim}, we fix notation and recall some basic facts. In Section \ref{sec-main}, we prove our results.


\section{Preliminaries}\label{prelim}


We assume familiarity with the basics of o-minimality and pregeometries, as  can be found, for example, in \cite{vdd-book} or \cite{pi}.
Recall that  $\cal M=\la M, <, +, 0, \dots\ra$ is our fixed o-minimal expansion of an ordered group with a distinguished positive element $1$ and $\dcl$ denotes the usual definable closure operator. We denote the corresponding dimension by $\dim$.   If $A, B$ are two sets, we often write $AB$ for $A\cup B$.   We denote by $\Gamma(f)$ the graph of a function $f$. If $T\subseteq M^m \times M^n$ and $x\in M^n$, we write $T_x$ for
\[
\{ b \in M^m \ : \ (b,x) \in X\}.
\]
The topological closure of a set $Y\sub M^n$ is denoted by $\overline Y$ and its frontier $\overline Y\sm Y$ by $\fr(Y)$. If $X\sub Y$, the relative interior of $X$ in $Y$ is denoted by $\intr_Y(X)$. It is not hard to see that
$$\intr_Y(X)= \{x\in X: \text{ there is open $B\sub M^n$ containing $x$ with $B\cap Y\sub X$}\}.$$

\begin{fact}\label{fact-intYX}
Let $X\sub Y\sub M^n$ be two $\cal L$-definable sets. Then
$$\dim (X\sm \intr_Y(X)) < \dim Y.$$
\end{fact}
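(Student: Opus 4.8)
The plan is to reduce the statement to the standard o-minimal frontier inequality $\dim \fr(Z) < \dim Z$ for nonempty $\mathcal L$-definable $Z$ (see \cite[Chapter 4]{vdd-book}). The first step is a purely set-theoretic identity, read off directly from the characterization of $\intr_Y(X)$ recalled just before the statement:
$$X \sm \intr_Y(X) = X \cap \overline{Y \sm X}.$$
Indeed, a point $x \in X$ fails to lie in $\intr_Y(X)$ precisely when every open $B \sub M^n$ containing $x$ satisfies $B \cap Y \not\sub X$, equivalently $B \cap (Y \sm X) \ne \es$, and this says exactly that $x \in \overline{Y \sm X}$.

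The second step notes that, since $X$ and $Y \sm X$ are disjoint, any point of $X \cap \overline{Y\sm X}$ lies in $\overline{Y\sm X}\sm(Y\sm X) = \fr(Y\sm X)$, so that
$$X \sm \intr_Y(X) \sub \fr(Y\sm X).$$
The third step applies the frontier inequality: if $Y \sm X \ne \es$, then $\dim \fr(Y\sm X) < \dim(Y\sm X) \leqs \dim Y$, giving the conclusion. In the remaining degenerate case $Y \sm X = \es$, i.e. $X = Y$, one has $\intr_Y(X) = Y$, hence $X \sm \intr_Y(X) = \es$, and the inequality holds as soon as $Y \ne \es$.

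I do not expect a genuine obstacle here; the mathematical content is entirely carried by the frontier inequality, and the only things to be careful about are the bookkeeping in the identity of the first step and the trivial empty-set case. (An alternative route, slightly longer, is to split on whether $\dim(Y\sm X) < \dim Y$ — where $\dim\overline{Y\sm X} = \dim(Y\sm X) < \dim Y$ already suffices — or $\dim(Y\sm X) = \dim Y$, where one again falls back on $\dim\fr(Y\sm X) < \dim(Y\sm X)$; the identity-based argument above is cleaner.)
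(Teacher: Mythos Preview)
Your proof is correct, but it follows a different route from the paper. The paper argues by cases on whether $\dim X < \dim Y$: if so, the bound is immediate since $X\sm\intr_Y(X)\sub X$; if $\dim X=\dim Y$, it assumes for contradiction that $\dim(X\sm\intr_Y(X))=\dim Y$, picks $V\sub X$ of full dimension disjoint from $\intr_Y(X)$, and uses cell decomposition to locate an open box $B$ with $\es\ne B\cap Y\sub V\sub X$, contradicting $V\cap\intr_Y(X)=\es$. Your argument instead establishes the clean identity $X\sm\intr_Y(X)=X\cap\overline{Y\sm X}\sub\fr(Y\sm X)$ and then invokes the standard frontier inequality once. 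Your approach is shorter and more transparent, since it packages the cell-decomposition work inside a single citation to the frontier inequality; the paper's approach is more self-contained but in effect reproves a special case of that inequality. Both handle (or tacitly assume away) the trivial case $Y=\es$ in the same way.
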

\begin{proof}
If $\dim X< \dim Y$, we are done. Assume $\dim X= \dim Y$ and, towards a contradiction, that the inequality fails. Then there is a set $V\sub X$ with $\dim V=\dim Y$, such that $V\cap \intr_Y(X)=\es$. By cell decomposition, it is not hard to find open $B\sub M^n$ such that $\es\ne B\cap Y\sub V\sub X$, and hence $V$ contains elements in $\intr_Y(X)$, a contradiction. 
\end{proof}


\subsection{Elimination of imaginaries}

We recall that a structure $\cal N$ \emph{eliminates imaginaries}  if for every $\emptyset$-definable equivalence relation $E$ on $N^n$, there is a $\emptyset$-definable map $f:N^n\to N^l$ such that for every $x, y\in N^n$,
$$E(x,y) \,\,\Lrarr\,\,f(x)=f(y).$$
In the order setting, we have the following criterion (extracted from \cite[Section 3]{pi}; for a proof see \cite[Fact 2.2]{el-Pind}).

\begin{fact}\label{criterion}
  Let $\cal N$  be a sufficiently saturated structure with two distinct constants in its language. Suppose the following property holds.
\begin{itemize}
  \item[(*)]  Let $B, C\sub N$ and $A=\dcl_{\cal N}(B) \cap \dcl_{\cal N}(C)$. If $X\sub N^n$ is $B$-definable and $C$-definable, then $X$ is $A$-definable.
\end{itemize}
Then $\cal N$ eliminates imaginaries.
\end{fact}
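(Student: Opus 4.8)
The plan is to verify the definition of elimination of imaginaries head on. Fix a $\es$-definable equivalence relation $E$ on $N^n$, and for $a\in N^n$ write $S_a=\{x\in N^n:E(x,a)\}$, an $a$-definable set, noting that $S_a=S_{a'}$ exactly when $E(a,a')$. It is enough to attach to each set of the form $S_a$ a finite tuple $e(S_a)\in N^l$, with $l$ bounded over all $a$, such that the map $a\mapsto e(S_a)$ is $\es$-definable in $\cal N$ and depends only on the set $S_a$, and such that $S_a$ is recoverable from $e(S_a)$ by a single $\es$-formula $\psi$, i.e.\ $S_a=\psi(N^n,e(S_a))$. Indeed, $f:=(a\mapsto e(S_a))$ is then $\es$-definable, and $E(a,a')\Rightarrow S_a=S_{a'}\Rightarrow f(a)=f(a')$, while $f(a)=f(a')\Rightarrow S_a=\psi(N^n,f(a))=\psi(N^n,f(a'))=S_{a'}\Rightarrow E(a,a')$; the passage from this uniform statement to its instance for a single $a$ (with $l$ and $\psi$ a priori depending on $a$) is undone by a routine compactness argument once $l$ is bounded. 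So it remains to produce, for one but arbitrary $S=S_a$, such a \emph{code}.

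The first step is to use $(*)$ to cut the parameters down. The family of definably closed sets $A\sub N$ over which $S$ is definable is nonempty (it contains $\dcl(a)$) and, by $(*)$, closed under pairwise — hence finite — intersections; since $S$ is cut out by a single formula with finitely many parameters, a saturation argument yields a \emph{least} such set $A_S$, which one writes as $A_S=\dcl(\bar e_0)$ for a finite tuple $\bar e_0$ of length bounded independently of $a$. By minimality, $\bar e_0\in\dcl(B)$ whenever $S$ is $B$-definable, so every automorphism of $\cal N$ fixing $S$ setwise fixes $A_S$ setwise. Thus $\bar e_0$ \emph{almost} codes $S$: the set $S$ is definable from $\bar e_0$ (uniformly over the relevant family, by compactness), and $\bar e_0$ is determined by $S$ up to the action of the setwise stabiliser of $A_S$ on $\bar e_0$. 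I expect that already establishing the existence of this least definably closed set of definition, and that it is $\dcl$-generated by a short tuple, will require a careful argument — this is the first technical point.

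What remains is to remove the residual ambiguity — the setwise stabiliser of $A_S$ may still move $\bar e_0$ — which amounts to eliminating finite imaginaries, and this is the step I expect to be the real obstacle. It is here that the hypotheses beyond $(*)$ are used: the two distinguished constants, and (the criterion being stated for the order setting) the ambient definable linear order. The idea is to replace the setwise-invariant data carried by $\bar e_0$ and $A_S$ by a tuple that is pointwise fixed by exactly the setwise stabiliser of $S$: one shows, by a further use of $(*)$, that this ambiguity is finite — e.g.\ the set $W=\{\bar y:\psi_0(N^n,\bar y)=S\}$ of parameter tuples realising a minimal defining formula $\psi_0$ is definable over $S$ and an automorphism fixes $S$ setwise iff it fixes $W$ setwise — and then one lists the relevant finite $S$-definable configuration in increasing order, the two constants serving to anchor the construction, producing a canonical tuple $e$ from which $S$ is recoverable by a fixed formula. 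This $e$ plays the role of $e(S_a)$ in the first paragraph, and the proof is complete. For the detailed execution, which we do not reproduce, see \cite[Section~3]{pi} and \cite[Fact~2.2]{el-Pind}.
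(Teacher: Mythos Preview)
The paper does not give a self-contained proof of this fact: it simply records the statement and refers the reader to \cite[Section 3]{pi} and \cite[Fact 2.2]{el-Pind} for the argument. Your sketch follows exactly the route of those references --- use $(*)$ together with saturation to produce a smallest $\dcl$-closed set of definition $A_S=\dcl(\bar e_0)$ for $S$, observe that the setwise stabiliser of $S$ fixes $A_S$ setwise (so $\bar e_0$ is a weak code), and then invoke the ambient linear order and the two constants to code the remaining finite configuration and pass from weak to full elimination of imaginaries --- and you cite the same two sources for the details. So your proposal is correct and, in fact, strictly more informative than the paper's own treatment.

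One small comment: your justification that the residual ambiguity is finite (the paragraph about $W=\{\bar y:\psi_0(N^n,\bar y)=S\}$) is the one genuinely delicate point, and your phrase ``by a further use of $(*)$'' undersells what is needed. What actually happens in \cite{pi} is that any $\bar e'$ in the $\Aut(\cal N/\ulcorner S\urcorner)$-orbit of $\bar e_0$ satisfies $\dcl(\bar e')=\dcl(\bar e_0)=A_S$, and one then argues that the orbit is finite; this step uses more than $(*)$ and is where the o-minimal (order) setting does real work, not only in the final listing-in-increasing-order step. You are right to flag it as the obstacle and to defer to the references, but the attribution of finiteness to $(*)$ alone is slightly misleading.
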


\subsection{The induced structure} 
Recall from the introduction that
$$P_{ind(D)}=\la P, \{R\cap P^l: R\sub M^l \text{ $\cal L_{D}$-definable}, l\in \N\}\ra.$$
\begin{remark}\label{rmk-ind}
For $A\sub P$, we have:
\begin{enumerate}
  \item if $Q\sub P^n$ is $A$-definable in $P_{ind(D)}$, and $Y\sub M^n$ is $\cal L_{AD}$-definable, then $Q\cap Y$ is $A$-definable in $P_{ind(D)}$. Indeed, $Q\cap Y=Q\cap (Y\cap P^n)$.

\item in general, if $Q\sub P^n$ is $A$-definable in $P_{ind(D)}$, then it is $AD$-definable. The converse will be true for Mann pairs, by Corollary \ref{ind_D2} below. 
\end{enumerate}
\end{remark}


\section{Proofs of the results}\label{sec-main}

In this section we prove  elimination of imaginaries  for $P_{ind(D)}$ under our assumptions (Theorem \ref{main}) and  deduce Corollaries \ref{cor-mann} and \ref{embed} from it. Our goal is to establish (*) from Fact \ref{criterion}  for $\cal N=P_{ind(D)}$ (Lemma \ref{pillay2} below). As in \cite{el-Pind}, the strategy is to reduce the proof of (*) to \cite[Proposition 2.3]{pi}, which is an assertion of (*) for $\cal M$. This reduction takes place in the proof of Lemma \ref{pillay2} below, and requires the key Lemma \ref{general1}.
The analogous key lemma in \cite{el-Pind} (namely, \cite[Lemma 3.1]{el-Pind}) cannot help us here, because its assumptions are not met in the proof of Lemma \ref{pillay2}. Furthermore, the proof of Lemma \ref{general1} requires an entirely new technique.

We begin with some preliminary observations.

\begin{fact}\label{op}
Assume \textup{(OP)}. Then for every $A\sub P$, $\dcl_{\cal L(P)}(A)=\dcl(A)$.
 \end{fact}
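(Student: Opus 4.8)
The inclusion $\dcl(A)\subseteq\dcl_{\cal L(P)}(A)$ is immediate, since every $\cal L_A$-definable set is in particular $A$-definable in $\WM$; so the plan is to establish the reverse inclusion, and the only ingredient needed is \textup{(OP)}. The key observation I would exploit is that a singleton is topologically closed: this lets \textup{(OP)} ``collapse'' a set defined in the pair onto a set defined in $\cal M$ alone, precisely because a closed set equals its own topological closure.

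First I would check that $A$ itself is an admissible parameter set for the hypothesis of \textup{(OP)}: since $A\subseteq P$, we have $A\setminus P=\es$, which is vacuously $\dcl$-independent over $P$. Next, fix $b\in\dcl_{\cal L(P)}(A)$, so that the singleton $V:=\{b\}\subseteq M$ is $A$-definable in $\WM$. Since the order topology on $M$ is Hausdorff, $V$ is closed, hence $\overline V=V$. Applying \textup{(OP)} to this $V$ then yields that $\{b\}=\overline V$ is $\cal L_A$-definable, i.e. $b\in\dcl(A)$, as desired.

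I do not anticipate a genuine obstacle here: the entire content lies in choosing $V$ to be a singleton and in the trivial remark that $A$ meets the independence hypothesis of \textup{(OP)}. The only step deserving a word of care is this last verification, and it is exactly the place where the assumption $A\subseteq P$ is used.
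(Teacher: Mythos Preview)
Your proof is correct and matches the paper's own argument essentially verbatim: take a singleton $\{b\}$ definable in $\WM$ over $A$, note that its closure is itself, and apply \textup{(OP)} to conclude it is $\cal L_A$-definable. Your explicit check that $A\setminus P=\es$ is vacuously $\dcl$-independent over $P$ is the one detail the paper leaves implicit.
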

\begin{proof}
Take $x\in \dcl_{\cal L(P)}(A)$. That is, the set $\{x\}$ is $A$-definable in $\WM$. By (OP),  we have that $\overline{\{x\}}$ is $\cal L_{A}$-definable. But $\overline{\{x\}}=\{x\}$.
\end{proof}

\begin{lemma}\label{Ldef}
Assume \textup{(OP)}. Let $X\sub M^n$ be an $\cal L$-definable set which is also $C$-definable, for some $C\sub M$ with $C\sm P$  $\dcl$-independent over $P$. Then $X$ is $\cal L_C$-definable.
\end{lemma}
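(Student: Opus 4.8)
The plan is to induct on $\dim X$, at each stage peeling off the top-dimensional part of $X$ via its topological closure and invoking \textup{(OP)}. This is the familiar ``peeling'' technique from the dense-pairs literature, and here \textup{(OP)} is exactly what makes it available.

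So suppose the lemma holds for all sets of dimension strictly less than $\dim X$ that satisfy its hypotheses. If $X=\es$ there is nothing to prove, so assume $X\ne\es$. Since $X$ is $C$-definable in $\WM$ and $C\sm P$ is $\dcl$-independent over $P$, condition \textup{(OP)} applies and shows that the topological closure $\overline X\sub M^n$ is $\cal L_C$-definable. Consider now the frontier $\fr(X)=\overline X\sm X$. Since $X$ is $\cal L$-definable, so are $\overline X$ and hence $\fr(X)$, and by o-minimality $\dim\fr(X)<\dim X$. Since $X$ is $C$-definable in $\WM$, so is $\fr(X)$, because topological closure and set-theoretic difference are definable operations in $\WM$. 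Hence $\fr(X)$ is an $\cal L$-definable set, $C$-definable in $\WM$, of dimension strictly less than $\dim X$, and so by the inductive hypothesis it is $\cal L_C$-definable. Finally, $X=\overline X\sm\fr(X)$ is then $\cal L_C$-definable, which closes the induction.

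I do not expect a genuine obstacle here; the only point requiring attention is the bookkeeping that at each step of the recursion the set in play remains \emph{simultaneously} $\cal L$-definable and $C$-definable in $\WM$, so that both \textup{(OP)} and the inductive hypothesis can be applied. This is precisely what makes the recursion terminate, together with the o-minimal fact that the dimension strictly drops when passing to the frontier of a nonempty definable set.
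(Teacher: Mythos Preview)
Your proof is correct and is essentially the same ``peeling'' argument by induction on $\dim X$ that the paper gives: apply \textup{(OP)} to get $\overline X$ $\cal L_C$-definable, use o-minimality to drop the dimension of $\fr(X)$, apply the inductive hypothesis to $\fr(X)$, and conclude for $X=\overline X\sm\fr(X)$. The only cosmetic difference is that the paper treats the base case $\dim X=0$ separately via Fact~\ref{op}, whereas your uniform argument absorbs it (since then $\fr(X)=\es$).
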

\begin{proof}
We work by induction on $k=\dim X$. For $k=0$, $X$ is finite, and hence every element of it is in $\dcl_{\cal L(P)}(C)$. By  Fact \ref{op}, it is in $\dcl(C)$. Now assume $k\ge 0$.  By (OP), $\overline X$ is $\cal L_C$-definable. By o-minimality, $\dim \fr(X)<k$. Since $\fr(X)=\overline X\sm X$ is both $\cal L$-definable and $C$-definable, by inductive hypothesis, it is $\cal L_C$-definable. So $X=\overline X \sm \fr(X)$ is $\cal L_C$-definable.
\end{proof}



\begin{lemma}\label{disjoint} Let $C\sub M$ and
 $$X=\bigcup_{i=1}^m (Z_i \cap R_i),$$
 where $Z_1, \dots, Z_m\sub M^n$  are $\cal L_{C}$-definable sets, and $R_1, \dots, R_m\sub P ^n$ are $\es$-definable in $P_{ind(D)}$.
Then
 $$X=\bigcup_{i=1}^l (Y_i \cap Q_i),$$
 for some $\cal L_{C}$-definable \textbf{disjoint} sets $Y_1, \dots, Y_l\sub M^n$, and sets $Q_1, \dots, Q_l\sub P ^n$ which are $\es$-definable in $P_{ind(D)}$.
\end{lemma}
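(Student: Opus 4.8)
The plan is to transform the given finite union $X=\bigcup_{i=1}^m (Z_i\cap R_i)$ into one with pairwise disjoint $\cal L_C$-definable pieces, by refining the $\cal L$-definable sets into the Boolean atoms they generate. First I would note that the collection $\{Z_1,\dots,Z_m\}$ generates a finite Boolean algebra of subsets of $M^n$ whose atoms are exactly the nonempty sets of the form $W_S=\bigcap_{i\in S} Z_i \cap \bigcap_{i\notin S}(M^n\sm Z_i)$ for $S\sub\{1,\dots,m\}$. Each such atom $W_S$ is $\cal L_C$-definable, and the atoms are pairwise disjoint. Moreover each $Z_i$ is the disjoint union of those atoms $W_S$ with $i\in S$.

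Next, using this decomposition I would rewrite each term $Z_i\cap R_i$ as $\bigcupdot_{S\ni i}(W_S\cap R_i)$, so that
$$X=\bigcup_{i=1}^m\bigcup_{S\ni i}(W_S\cap R_i) = \bigcup_{S}\left(W_S\cap \bigcup_{i\in S} R_i\right),$$
where the outer union now runs over the (finitely many) subsets $S$ with $W_S\ne\es$. Since the $W_S$ are pairwise disjoint, this is a disjoint union in the required sense. Finally, set $Y_S=W_S$ and $Q_S=\bigcup_{i\in S} R_i$; the $Y_S$ are $\cal L_C$-definable and pairwise disjoint, and each $Q_S$, being a finite union of the sets $R_i\sub P^n$ which are $\es$-definable in $P_{ind(D)}$, is itself a subset of $P^n$ that is $\es$-definable in $P_{ind(D)}$ (finite unions of $\es$-definable sets are $\es$-definable). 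Re-indexing the nonempty $W_S$ as $Y_1,\dots,Y_l$ with corresponding $Q_1,\dots,Q_l$ gives the claimed form.

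I do not anticipate a serious obstacle here: the argument is a purely Boolean bookkeeping step, and the only points needing a word of care are (a) verifying the set identity $X=\bigcup_S (W_S\cap\bigcup_{i\in S}R_i)$, which follows because $x\in X$ iff $x\in Z_i\cap R_i$ for some $i$, and for that $i$ there is a unique atom $W_S$ with $i\in S$ containing $x$; and (b) checking that $Q_S\sub P^n$ — which holds since each $R_i\sub P^n$ — so that $Q_S$ is a legitimate $\es$-definable set in $P_{ind(D)}$. Note that no assumption on $C$ relative to $P$, nor conditions (OP), (dcl)$_D$, (ind)$_D$, is used; the lemma is purely combinatorial.
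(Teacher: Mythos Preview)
Your proof is correct and is essentially identical to the paper's: both define, for each $\sigma\subseteq\{1,\dots,m\}$, the atom $Y_\sigma=\bigcap_{i\in\sigma}Z_i\setminus\bigcup_{j\notin\sigma}Z_j$ and the set $Q_\sigma=\bigcup_{i\in\sigma}R_i$, and observe that $X=\bigcup_\sigma(Y_\sigma\cap Q_\sigma)$ with the $Y_\sigma$ pairwise disjoint. Your explicit framing in terms of Boolean atoms and your remark that no tameness hypotheses are needed are welcome clarifications, but the argument is the same.
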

\begin{proof}
For $\sigma\sub \{1, \dots, m\}$,  let
$$Q_\sigma= \bigcup_{i\in \sigma} R_i$$
and
$$Y_\sigma= \left(\bigcap_{i\in \sigma} Z_i\right)\sm \left(\bigcup_{j\not\in \sigma} Z_j\right).$$
It is then easy to check that for any two distinct $\sigma, \tau \sub \{1, \dots, m\}$, we have $Y_\sigma\cap Y_\tau=\es$, and that
$$X=\bigcup_{\sigma\sub \{1, \dots, m\}} (Y_\sigma\cap Q_\sigma),$$
as required.
\end{proof}

Now, the key technical lemma.


\begin{lemma}\label{general1} Assume \textup{(OP)} and \textup{(ind)$_D$}, and that $D$ is $\dcl$-independent over $P$. Let $B, C\sub P$ and $X\sub P^n$ be $B$-definable and $C$-definable in $P_{ind(D)}$.
Then there are $W_1, \dots, W_l\sub M^n$, that are both $\cal L_{BD}$-definable and $\cal L_{CD}$-definable, and sets $S_1, \dots, S_k\sub P^n$, that are $\es$-definable in $P_{ind(D)}$, such that
$$X=\bigcup_{i=1}^l W_i\cap S_i.$$
\end{lemma}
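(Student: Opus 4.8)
The plan is to start from the two representations of $X$ guaranteed by (ind)$_D$ — one using the $B$-definability, one using the $C$-definability — and intersect them, then feed each resulting $\cal L$-definable piece through Lemma \ref{Ldef} to descend the parameters into $\dcl(BD)\cap\dcl(CD)$. Concretely, first I would apply (ind)$_D$ to $X$ viewed as $B$-definable in $P_{ind(D)}$: since $B\sub P$ and (ind)$_D$ gives a finite union of traces on $\es$-definable sets, and since $X$ is $B$-definable, we may after intersecting with a $B$-definable set (using Remark \ref{rmk-ind}(1)) write $X=\bigcup_i(Y_i^B\cap Q_i^B)$ with $Y_i^B$ being $\cal L$-definable and, by tracking the proof of (ind)$_D$ together with Remark \ref{rmk-ind}(2), $\cal L_{BD}$-definable, and the $Q_i^B\sub P^{k}$ being $\es$-definable in $P_{ind(D)}$. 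Symmetrically $X=\bigcup_j(Y_j^C\cap Q_j^C)$ with $Y_j^C$ being $\cal L_{CD}$-definable and $Q_j^C$ being $\es$-definable in $P_{ind(D)}$.

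Next I would intersect: $X=X\cap X=\bigcup_{i,j}(Y_i^B\cap Y_j^C)\cap(Q_i^B\cap Q_j^C)$. The set $Q_i^B\cap Q_j^C$ is $\es$-definable in $P_{ind(D)}$, so it can play the role of an $S$. The obstacle is that $Y_i^B\cap Y_j^C$ is only known to be $\cal L_{BD}$-definable and separately $\cal L_{CD}$-definable — not $\cal L$-definable over the intersection of parameter sets. This is exactly where Lemma \ref{Ldef} enters, but to use it I need the relevant set to be $\cal L$-definable and definable over a parameter set whose non-$P$ part is $\dcl$-independent over $P$. The point is that $D$ is $\dcl$-independent over $P$ by hypothesis, $BD$ and $CD$ have their non-$P$ parts contained in $D$ (since $B,C\sub P$), so $Y_i^B\cap Y_j^C$, being $\cal L$-definable (it is a Boolean combination of $\cal L$-definable sets) and $\cal L_{BD}$-definable with $(BD)\sm P\sub D$ independent over $P$, is by Lemma \ref{Ldef} actually $\cal L_{BD}$-definable — and likewise $\cal L_{CD}$-definable. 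Wait: that only re-derives what we had. The real mechanism must instead be: since $Y_i^B\cap Y_j^C$ is $\cal L$-definable and $BD$-definable in $\WM$ with the independence hypothesis, Lemma \ref{Ldef} gives $\cal L_{BD}$-definability, and symmetrically $\cal L_{CD}$-definability; so each $W$ we produce is simultaneously $\cal L_{BD}$- and $\cal L_{CD}$-definable, which is precisely the conclusion asked for. So the step to get right is that the $Y$'s coming out of (ind)$_D$ need only be upgraded from "$\cal L$-definable and $BD$-definable in $\WM$" to "$\cal L_{BD}$-definable" via Lemma \ref{Ldef}, and the independence hypothesis on $D$ makes $(BD)\sm P$ (hence $(CD)\sm P$) $\dcl$-independent over $P$ so that Lemma \ref{Ldef} applies.

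Finally I would relabel the double index $(i,j)$ as a single index and adjust the ambient powers of $P$ (the $Q$'s may live in $P^{k}$ for various $k$, but one can pad with projections or replace $P^n$-traces appropriately, as in the bookkeeping already used in Lemma \ref{disjoint}), and optionally invoke Lemma \ref{disjoint} to make the $W_i$ disjoint, though disjointness is not required by the statement. The main obstacle, then, is not the combinatorics of intersecting the two unions — that is routine — but verifying carefully that the $\cal L$-definable pieces $Y_i^B$ produced by (ind)$_D$ are $BD$-definable in $\WM$ (so that Remark \ref{rmk-ind}(2) and Lemma \ref{Ldef} can be chained), i.e.\ keeping precise track of which parameters are used where. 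I expect this parameter-tracking to be the only genuinely delicate point.
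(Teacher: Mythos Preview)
Your plan has a genuine gap at its core. You assume that the $Y_i^B$ produced by (ind)$_D$ can be taken to be $\cal L_{BD}$-definable, and you describe this as ``parameter-tracking'' that is merely ``delicate''. But it is not available at all. Condition (ind)$_D$ is a hypothesis, not a construction with a proof one can inspect; it explicitly gives \emph{no} control over the parameters of the $Y_i$'s (the paper flags this as point (c) in the introduction). The fact that the whole union $\bigcup_i (Y_i\cap Q_i)$ equals the $BD$-definable set $X$, and that each $Q_i$ is $D$-definable, does not force any individual $Y_i$ or $Y_i\cap Q_i$ to be $BD$-definable: different pieces can use stray parameters which cancel in the union. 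Consequently $Y_i^B\cap Y_j^C$ need not be $BD$-definable in $\WM$, so Lemma \ref{Ldef} cannot be invoked for it, and the intersection-of-two-representations argument stalls. Note also that the parameter-controlled version you want is precisely Corollary \ref{ind_D}, which the paper deduces \emph{from} this lemma (taking $B=C=A$); using it here would be circular.

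The paper's proof avoids this entirely by a different mechanism. It does induction on $\dim \overline X$. Given one decomposition $X=\bigcup_i (Z_i\cap R_i)$ with the $Z_i$ disjoint (via Lemma \ref{disjoint}), it replaces each $Z_i$ by the set
\[
T_i=\{x\in \overline X:\text{ some relatively open }V\sub \overline X\text{ around }x\text{ has }V\cap R_i\sub X\}.
\]
The point is that $T_i$ is defined purely from $\overline X$ and $R_i$ and $X$; since $X$ is both $BD$- and $CD$-definable, so is $T_i$, and since $T_i$ is relatively open in $\overline X$ it is $\cal L$-definable by (OP), whence Lemma \ref{Ldef} gives that $T_i$ is simultaneously $\cal L_{BD}$- and $\cal L_{CD}$-definable. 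One then shows $\bigcup_i(T_i\cap R_i)\sub X$ and that the leftover $X\sm\bigcup_i(T_i\cap R_i)$ has closure of strictly smaller dimension, closing the induction. The key idea you are missing is this: rather than hoping the $Y_i$'s already carry the right parameters, one manufactures new $\cal L$-definable sets out of $X$ itself, so that the dual definability is automatic.
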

\begin{proof}  First note that $X$ is both $BD$-definable and $CD$-definable in $\la \cal M, P\ra$.
Since $B, C\sub P$, by (OP) it follows that $\overline X$ is $\cal L_{BD}$-definable and $\cal L_{CD}$-definable.

We perform induction on the dimension of $\overline X$. For $\dim \overline X =0$, $X$ is finite and $X=\overline X =P^n \cap \overline X$, as needed.
Suppose now that $\dim X=k>0$. By (ind)$_D$ and Lemma \ref{disjoint}, there are $\cal L$-definable disjoint sets $Z_1, \dots, Z_m\sub M^n$, and sets $R_1, \dots, R_l\sub P ^n$, each $\es$-definable in $P_{ind(D)}$, such that
 $$X=\bigcup_{i=1}^l (Z_i \cap R_i).$$
For every $i$, define
$$T_i=\{ x\in \overline X : \text{ there is relatively open $V\sub \overline X$ around $x$, with $V\cap R_i\sub X$}\}.$$
Let $T=\bigcup_i T_i$. It is immediate from the definition, that each $T_i$, and hence $T$, is relatively open in $\overline X$. Therefore, by (OP), it is $\cal L$-definable. 
On the other hand, each $T_i$ is $BD$-definable and $CD$-definable, because $X$ is, and $R_i$ is $D$-definable. Hence, by Lemma \ref{Ldef}, each $T_i$, and hence $T$, is $\cal L_{BD}$-definable and $\cal L_{CD}$-definable.\\

\noindent\textbf{Claim.} {\em $\dim \overline{X\sm \bigcup_i (T_i \cap R_i)}<k$.}
\begin{proof}
Observe first that $X\sub \bigcup_i Z_i$, and hence it suffices to show that for each $i$,
$$\dim \big((Z_i\cap X) \sm (T_i \cap R_i)\big) < k.$$
We may write
$$(Z_i\cap X) \sm (T_i \cap R_i)= \big((Z_i\cap X) \sm \intr_{\overline X} (Z_i\cap X)\big) \cup \big(\intr_{\overline X} (Z_i\cap X) \sm (T_i \cap R_i)\big),$$
By Fact \ref{fact-intYX}, it suffices to show that  $\intr_{\overline X} (Z_i\cap X) \sub (T_i \cap R_i)$. Clearly, $\intr_{\overline X} (Z_i\cap X) \sub \intr_{\overline X} (Z_i)\cap X$, and hence it suffices to show:
$$ \intr_{\overline X} (Z_i)\cap X\sub T_i \cap R_i.$$ Let $x\in \intr_{\overline X} (Z_i)\cap X$. Since $x\in \intr_{\overline X} (Z_i)$, there is a relatively open $V\sub \overline X$ containing $x$, with $V\sub Z_i$, and hence $V\cap R_i\sub Z_i\cap R_i\sub X$. Therefore $x\in T_i$. Since $x\in X\cap Z_i$ and the $Z_j$'s are disjoint, we must also have $x\in R_i$. Hence $x\in T_i\cap R_i$, as needed.
\end{proof}

By Remark \ref{rmk-ind}(1), the set $(X\cap T)\sm \bigcup_i (T_i \cap R_i)$ is both $B$-definable and $C$-definable in $P_{ind(D)}$. Hence, by inductive hypothesis and the claim, the conclusion holds for this set. Now, for each $i$, by definition of $T_i$, we have $T_i \cap R_i\sub X$. Hence
$$X= \left(X\sm \bigcup_i (T_i\cap R_i)\right) \cup \bigcup_i (T_i\cap R_i),$$
 and we are done.\end{proof}

\begin{cor}\label{ind_D} Assume \textup{(OP)} and \textup{(ind)$_D$}, and that $D$ is $\dcl$-independent over $P$.  Let $A\sub P$ and $X\sub P^n$ be $A$-definable in $P_{ind(D)}$. Then  there are $\cal L_{AD}$-definable sets $W_1, \dots, W_l\sub M^n$, and sets $S_1, \dots, S_l\sub P^k$ that are $\es$-definable in $P_{ind(D)}$,  such that $$X=\bigcup_i (W_i \cap S_i).$$ 
\end{cor}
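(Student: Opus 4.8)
The plan is to derive Corollary \ref{ind_D} from Lemma \ref{general1} by taking $B=C=A$. Applying Lemma \ref{general1} with this choice, we obtain sets $W_1,\dots,W_l\sub M^n$ that are both $\cal L_{AD}$-definable (twice, trivially, since $BD=CD=AD$) and sets $S_1,\dots,S_k\sub P^n$ that are $\es$-definable in $P_{ind(D)}$, with $X=\bigcup_i W_i\cap S_i$. This is almost exactly the desired conclusion, so most of the work is already done by the key lemma; the corollary is essentially a specialization.

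The one point that requires a small remark is the bookkeeping on the index sets and the ambient powers: Lemma \ref{general1} produces $l$ sets $W_i$ and $k$ sets $S_j$, whereas the statement of Corollary \ref{ind_D} writes $X=\bigcup_i(W_i\cap S_i)$ with a common index running $1,\dots,l$ and with $S_i\sub P^k$. I would reconcile this by reindexing: after relabelling we may assume the union in Lemma \ref{general1} is written as a single union over a common finite index set, allowing repetitions of the $W_i$'s or $S_j$'s as needed (for instance by padding with copies, or by rewriting $\bigcup_i W_i\cap S_i$ over the product index set and absorbing). Since each $S_j\sub P^n$ is in particular a subset of some $P^k$ (one may take $k=n$), the statement's $P^k$ is no stronger than $P^n$. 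So after this trivial reorganization we get $\cal L_{AD}$-definable sets $W_1,\dots,W_l\sub M^n$ and $\es$-definable-in-$P_{ind(D)}$ sets $S_1,\dots,S_l\sub P^k$ with $X=\bigcup_i(W_i\cap S_i)$, as required.

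There is essentially no obstacle here: the entire difficulty has been absorbed into Lemma \ref{general1}, whose proof carries out the dimension induction and the open-core argument via (OP) and (ind)$_D$. The only thing to double-check is that the hypotheses of Lemma \ref{general1} — namely (OP), (ind)$_D$, $D$ being $\dcl$-independent over $P$, and $X\sub P^n$ being both $B$-definable and $C$-definable in $P_{ind(D)}$ — are all met with $B=C=A$, which is immediate since $A$-definable sets are trivially both $A$-definable and $A$-definable. Thus I would present the proof as: apply Lemma \ref{general1} with $B=C=A$, observe $BD=CD=AD$, and reindex.
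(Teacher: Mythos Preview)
Your proposal is correct and follows exactly the paper's approach: the paper's proof is simply ``By Lemma \ref{general1} for $B=C=A$.'' Your additional remarks about reindexing and the $P^k$ versus $P^n$ discrepancy are reasonable bookkeeping, but the paper does not bother with them.
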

\begin{proof}
  By Lemma \ref{general1} for $B=C=A$.
\end{proof}


Our next goal is to prove the promised Lemma \ref{pillay2}. Denote by $cl_D$ the definable closure operator in $P_{ind(D)}$. We first prove that, under (OP) and (ind)$_D$, $cl_D$ defines a pregeometry (Corollary \ref{clDA}).

\begin{lemma}\label{extendf} Assume \textup{(OP)} and \textup{(ind)$_D$}, and that $D$ is $\dcl$-independent over $P$.
Let $f: P^n\to P$ be an $A$-definable map in $P_{ind(D)}$. Then there is an $\cal L_{AD}$-definable map $F: M^n\to M^k$ that extends $f$.
\end{lemma}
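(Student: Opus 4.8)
The goal is to promote an $A$-definable function $f\colon P^n\to P$ in $P_{ind(D)}$ to an honest $\cal L_{AD}$-definable partial function on $M^n$. The natural object to work with is the graph $\Gamma(f)\sub P^{n+1}$, which is $A$-definable in $P_{ind(D)}$. By Corollary \ref{ind_D}, we may write
$$\Gamma(f)=\bigcup_{i=1}^l (W_i\cap S_i),$$
with each $W_i\sub M^{n+1}$ being $\cal L_{AD}$-definable and each $S_i\sub P^k$ being $\es$-definable in $P_{ind(D)}$. The issue is that the $S_i$'s carry no $\cal L$-definable information, so I cannot simply take $\bigcup_i W_i$: this $\cal L_{AD}$-definable set may fail to be the graph of a function, since for a given $x\in P^n$ there could be several values $y$ with $(x,y)\in W_i$ for some $i$, only one of which is the "correct" $f(x)$ selected by the membership condition in $S_i$.

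First I would reduce the $W_i$'s to the case where each $W_i$ is itself (the restriction to its domain of) a graph of an $\cal L_{AD}$-definable function. This is where o-minimality and definable Skolem functions in $\cal M$ come in: for each $i$, the set $W_i$ has, by cell decomposition, finite fibers $(W_i)_x$ for $x$ in an $\cal L_{AD}$-definable set $U_i$ of full dimension in $\pi(W_i)$ (plus a lower-dimensional remainder, on which we recurse). On the finite-fiber part we can split $W_i$ into finitely many $\cal L_{AD}$-definable graphs $\Gamma(g_{i,1}),\dots,\Gamma(g_{i,m_i})$ of $\cal L_{AD}$-definable partial functions $g_{i,j}\colon M^n\to M$. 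After re-indexing, we may thus assume
$$\Gamma(f)=\bigcup_{j=1}^{r}(\Gamma(g_j)\cap S_j'),$$
where each $g_j$ is an $\cal L_{AD}$-definable partial function $M^n\to M$ and each $S_j'\sub P^{k}$ is $\es$-definable in $P_{ind(D)}$ (absorbing the $\cal L$-definable domain conditions on $U_i$ into suitable $\cal L_{AD}$-definable restrictions of the $g_j$, or rather keeping them as $W_i$-type sets that, after another application of Corollary \ref{ind_D}, we again push into the $S$-part — this bookkeeping is routine).

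Now the key point: for each $x\in P^n$, exactly one value $f(x)$ is picked out, and it equals $g_j(x)$ for every $j$ with $x\in (S_j')_{\text{appropriate coordinates}}$ and $x\in \dom g_j$. The plan is to linearly order the finitely many functions $g_1,\dots,g_r$ (say by the order on $M$ applied to their values, breaking ties arbitrarily but $\cal L_{AD}$-definably, which is possible since there are finitely many) and then note that $f(x)$ is one of the finitely many values $g_1(x),\dots,g_r(x)$, chosen according to a rule that depends only on which $g_j(x)$ it is — but we do not actually need to reconstruct the selection rule inside $\cal M$. Instead, let $F\colon M^n\to M^r$ be the $\cal L_{AD}$-definable map $x\mapsto (g_1(x),\dots,g_r(x))$ (with $g_j(x)$ set to some fixed constant, e.g. $0$, when $x\notin\dom g_j$). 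Then for $x\in P^n$, the value $f(x)$ is one of the coordinates of $F(x)$, so $f$ factors through $F$ in the sense that $F(x)$ determines $f(x)$: indeed if $x,x'\in P^n$ with $F(x)=F(x')$, then $\Gamma(f)\cap \pi^{-1}(x)$ and $\Gamma(f)\cap\pi^{-1}(x')$ are computed by the same data $(\Gamma(g_j)\cap S_j')$ — wait, this last step needs care, since $x\in S_j'$ and $x'\in S_j'$ need not be equivalent.

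\textbf{The main obstacle}, then, is precisely this: passing from "$\Gamma(f)$ is a finite union $\bigcup_j(\Gamma(g_j)\cap S_j')$ of $\cal L_{AD}$-pieces intersected with induced-structure sets" to "$f$ extends to a single $\cal L_{AD}$-definable function". The cleanest fix, which is what I would ultimately carry out, is not to extend $f$ by a function into $M$ but to let $F$ be the tuple-valued map $x\mapsto(g_1(x),\dots,g_r(x))\colon M^n\to M^r$ — this is why the statement allows codomain $M^k$ rather than $M$ — and then observe directly that $f(x)$ is recovered from $F(x)$ together with the information "which coordinate": since on $P^n$ the sets $\Gamma(g_j)\cap S_j'$ together form a graph, for each $x\in P^n$ there is a \emph{unique} $j(x)$ (say the least) with $(x,f(x))\in\Gamma(g_j)\cap S_j'$, and I must show $F(x)$ determines $f(x)$. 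That forces me to arrange the decomposition so that the $S_j'$ are \emph{disjoint} — which I can do by Lemma \ref{disjoint} applied within $P_{ind(D)}$, or by the boolean-combination trick: replace the family $\{S_j'\}$ by the atoms of the boolean algebra it generates, at the cost of enlarging $r$ and replacing each $g_j$ on an atom by whichever original $g_i$ was correct there. Once the $S_j'$ are disjoint and cover $P^n$ under the projection, each $x\in P^n$ lies in exactly one $S_{j}'$-fiber and $f(x)=g_j(x)$, the $j$-th coordinate of $F(x)$; hence $F(x)=F(x')\Rightarrow f(x)=f(x')$ fails in general \emph{unless} $j$ itself is determined by $F(x)$ — so in fact I should build $F$ to also output the index, i.e. take $F\colon M^n\to M^{r}\times\{1,\dots,r\}\sub M^{r+1}$ sending $x\mapsto(g_1(x),\dots,g_r(x), j)$ where $j$ is $\cal L_{AD}$-definably determined by which of the (now $\cal L_{AD}$-definable, after absorbing!) conditions $x$ satisfies. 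The honest resolution is therefore: after the reductions, the domain conditions separating the pieces are \emph{not} purely induced-structure sets but can be taken $\cal L_{AD}$-definable up to a lower-dimensional error, the error is handled by induction on $\dim\overline{\Gamma(f)}$ exactly as in the proof of Lemma \ref{general1}, and the base case is trivial; I would write the induction with the statement "there is $\cal L_{AD}$-definable $F\colon M^n\to M^k$ with $f(x)$ a fixed $\cal L$-definable function of $F(x)$ for $x\in P^n$", which immediately yields an extending function after composing with that $\cal L$-definable selector and is all that is needed for the pregeometry arguments that follow.
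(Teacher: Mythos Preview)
Your overall strategy --- decompose $\Gamma(f)=\bigcup_i W_i\cap S_i$ via Corollary~\ref{ind_D}, reduce each $W_i$ to graphs of $\cal L_{AD}$-definable partial functions $g_j$, and output the tuple $F=(g_1,\dots,g_r)\colon M^n\to M^r$ --- is exactly the shape of the paper's argument. The codomain $M^k$ in the statement is there precisely to allow this bundling: the paper reduces to pieces $f_i$ with $\Gamma(f_i)=W_i\cap S_i$, extends each to some $F_i\colon M^n\to M$, and implicitly tuples them. So your search for a ``fixed $\cal L$-definable selector'' recovering $f(x)$ from $F(x)$ is unnecessary (and, as you half-noticed, such a selector need not exist): the tuple itself is the answer.

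The genuine gap is in how you reduce $W_i$ to function graphs. You claim that the finite-fiber locus $U_i=\{x:(W_i)_x\text{ is finite}\}$ has full dimension in $\pi(W_i)$, with a lower-dimensional remainder on which you recurse. This is false: nothing in Corollary~\ref{ind_D} prevents, say, $W_i=M^{n+1}$, in which case $U_i=\emptyset$ and there is no dimension drop at all. Your proposed induction on $\dim\overline{\Gamma(f)}$ therefore never gets started, since the ``bad'' part of $\Gamma(f)$ --- the part sitting over infinite fibers of the $W_i$'s --- may be all of $\Gamma(f)$.

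What you are missing is an observation about the $S_i$'s, not the $W_i$'s. By (OP), each fiber $(S_i)_x$ (for $x\in P^n$) has $\cal L$-definable closure, so it is dense in a finite union of open intervals and points. Since $(W_i)_x\cap(S_i)_x$ has at most one element (it lies in $\Gamma(f)_x$), any open interval contained in $(W_i)_x$ can meet $(S_i)_x$ in at most one point --- so the interval parts of $(W_i)_x$ are essentially irrelevant to $W_i\cap S_i$. The paper uses this to replace $W_i$, without loss of generality, by an $\cal L_{AD}$-definable set whose fibers over $\pi(W_i)\cap P^n$ are singletons; after that the extension $F_i$ is immediate, with no cell decomposition and no induction.
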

\begin{proof}
By Corollary \ref{ind_D}, there are finitely many $\cal L_{AD}$-definable sets $W_1, \dots, W_l\sub M^{n+1}$ and $\es$-definable sets $S_1, \dots, S_l\sub  P^{n+1}$, such that $\Gamma(f)= \bigcup_i W_i\cap S_i$. Fix $i$, and let $f_i$ be the map whose graph equals $W_i\cap S_i$. It clearly suffices to prove the lemma for $f_i$.
By (OP) and o-minimality, each fiber $(S_i)_x$ is dense in a finite union of open intervals and points. Hence, without loss of generality, we may assume that for every $x\in \pi(W_i)\cap P^n$, the fiber $(W_i)_x$ is a singleton. Denote by $\pi:M^{n+1}\to M^n$ the projection onto the first $n$ coordinates. The set
$$X_i=\{x\in \pi(W_i):\, (W_i)_x \text{ is singleton}\}$$
is $\cal L_{AD}$-definable. So, $\pi(S_i)\sub X_i$. Now let
$$W_i'=\left(\bigcup_{x\in X_i} \{x\}\times (W_i)_x\right)\cup \{(x, 0) : x\in M^n\sm X_i\}.$$
Then $W'_i$ is $\cal L_{AD}$-definable, it is the graph of a function $F_i: M^n\to M$, and
$\Gamma(f_i)=W_i'\cap S_i$, as required.
\end{proof}


\begin{cor}\label{clDA} Assume \textup{(OP)} and \textup{(ind)$_D$}, and that $D$ is $\dcl$-independent over $P$. Then for every $A\sub P$, $\cl_D(A)=\dcl (AD)\cap P$. In particular, $cl_D$ defines a pregeometry.
\end{cor}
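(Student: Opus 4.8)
The plan is to prove the two assertions of Corollary~\ref{clDA} in turn, using Lemma~\ref{extendf} for the identity $\cl_D(A)=\dcl(AD)\cap P$, and then invoking a standard fact about definable closure in an o-minimal structure for the pregeometry claim.

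First I would establish the inclusion $\cl_D(A)\sub \dcl(AD)\cap P$. If $b\in\cl_D(A)$, then $b\in P$ and $\{b\}$ is $A$-definable in $P_{ind(D)}$; by Remark~\ref{rmk-ind}(2) it is then $AD$-definable in $\WM$, so $\{b\}$ is $\cal L_{AD}$-definable by (OP) (the topological closure of a point is the point, and $A\sub P$ with $D$ $\dcl$-independent over $P$ puts us in the situation of (OP)), whence $b\in\dcl(AD)$. For the reverse inclusion, suppose $b\in\dcl(AD)\cap P$. Then there is an $\cal L_{AD}$-definable (partial) function, which after restricting to a suitable $\cal L_{AD}$-definable set on which it is single-valued we may take to be a total function $g\colon M^n\to M$ with $g(a)=b$ for some tuple $a$ from $A$; I would feed the trace $\Gamma(g)\cap P^{n+1}$ — which is $\es$-definable in $P_{ind(D)}$ — together with evaluation at the $A$-point $a$ to exhibit $b$ as an element of $\cl_D(A)$. (The role of Lemma~\ref{extendf} is the symmetric direction: it guarantees that $\cl_D$-functions do not escape $\dcl(AD)$, so that $\cl_D$ behaves like the restriction of $\dcl(\cdot\,D)$ to $P$.)

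For the pregeometry claim, I would argue that $\cl_D$ on subsets of $P$ is exactly the trace on $P$ of the pregeometry given by $A\mapsto\dcl(AD)$ on $M$. The operator $A\mapsto\dcl(AD)$ is a pregeometry on $M$ because $D$ is $\dcl$-independent over $P$ and $\dcl$ is a pregeometry in the o-minimal $\cal M$ (this is the standard localisation of a pregeometry at an independent set $D$). Finitary character and monotonicity of $\cl_D$ are immediate; the idempotence $\cl_D(\cl_D(A))=\cl_D(A)$ follows from the identity $\cl_D(A)=\dcl(AD)\cap P$ just proved, since $\dcl(\dcl(AD)\cap P\cup D)\sub\dcl(\dcl(AD)\cup D)=\dcl(AD)$. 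Finally, the exchange property: if $b\in\cl_D(Ac)\sm\cl_D(A)$ with $b,c\in P$, then $b\in\dcl(AcD)\sm\dcl(AD)$, so by exchange for the pregeometry $A'\mapsto\dcl(A'D)$ we get $c\in\dcl(AbD)$, and since $c\in P$ this gives $c\in\dcl(AbD)\cap P=\cl_D(Ab)$, as required.

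The main obstacle I anticipate is the reverse inclusion $\dcl(AD)\cap P\sub\cl_D(A)$: one must be careful that a single element of $\dcl(AD)$ arises from an honest $\cal L_{AD}$-definable function evaluated at a tuple from $A$ (not from $AD$), so that the corresponding trace on $P^{n+1}$ is $\es$-definable in $P_{ind(D)}$ and the evaluation point lies in $A$; packaging the $D$-parameters into the ambient $\cal L$-formula rather than into the point of evaluation is the delicate bookkeeping step, but it is exactly what the definition of $\cal L_{ind(D)}$ allows. Everything else is a routine transfer of the pregeometry axioms from $\dcl$ in $\cal M$ to $\cl_D$ via the established set-theoretic identity.
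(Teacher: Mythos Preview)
Your proof is correct and follows essentially the same strategy as the paper: establish the identity $\cl_D(A)=\dcl(AD)\cap P$ and then transfer the pregeometry axioms from $\dcl(-\,D)$ in $\CM$ to $\cl_D$ on $P$. The paper's proof is very terse (``$\supseteq$ immediate, $\sub$ from Lemma~\ref{extendf}, pregeometry easy''), and your expanded version of the $\supseteq$ inclusion and of the pregeometry verification matches what the paper has in mind.

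There is one minor but genuine difference in the $\sub$ direction. The paper invokes Lemma~\ref{extendf} (which in turn relies on (ind)$_D$ through Corollary~\ref{ind_D}): an $A$-definable constant function in $P_{ind(D)}$ extends to an $\cal L_{AD}$-definable function in $\CM$, so its value lies in $\dcl(AD)$. You instead argue directly from (OP): the singleton $\{b\}$ is $AD$-definable in $\WM$, and since $AD\setminus P\sub D$ is $\dcl$-independent over $P$, (OP) gives that $\overline{\{b\}}=\{b\}$ is $\cal L_{AD}$-definable. Your route is more elementary for this inclusion (it avoids (ind)$_D$ entirely), and it is exactly the argument behind Fact~\ref{op}. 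Your parenthetical remark about Lemma~\ref{extendf} being ``the symmetric direction'' is therefore a bit confused---you have already handled that direction without it---but this does not affect correctness. The one cosmetic slip is the justification ``because $D$ is $\dcl$-independent over $P$'' for $A\mapsto\dcl(AD)$ being a pregeometry: localisation of a pregeometry at any set is again a pregeometry, so that hypothesis is not needed there.
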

\begin{proof}
The inclusion $\supseteq$ is immediate from the definitions, whereas the inclusion $\sub$ is immediate from Lemma \ref{extendf}. Since $\dcl(-D)$ defines a pregeometry in $\cal M$, it follows easily that so does $cl_D(-)$ in $P_{ind(D)}$.
\end{proof}

\begin{lemma}\label{pillay2} Assume \textup{(OP)}, \textup{(dcl)}$_D$ and \textup{(ind)$_D$}, and that $D$ is $\dcl$-independent over $P$. Let $B, C\sub P$ and $A=cl_D(B) \cap cl_D(C)$. If $X\sub P^n$ is $B$-definable and $C$-definable in $P_{ind(D)}$, then $X$ is $A$-definable in $P_{ind(D)}$.
\end{lemma}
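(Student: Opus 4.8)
The goal is to verify property (*) from Fact \ref{criterion} for $\cal N = P_{ind(D)}$, so that elimination of imaginaries follows. By Corollary \ref{clDA}, $cl_D$ is a pregeometry with $cl_D(A) = \dcl(AD) \cap P$ for $A \sub P$; in particular the set $A = cl_D(B) \cap cl_D(C) = \dcl(BD) \cap \dcl(CD) \cap P$ appearing in the statement is exactly the set to which hypothesis (dcl)$_D$ applies. The plan is first to pass from the induced-structure data to $\cal L$-definable data via Lemma \ref{general1}, then to invoke (dcl)$_D$ to locate the relevant $\cal L$-definable sets over $\dcl(AD)$, and finally to descend back to the induced structure.

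\emph{Step 1: extract $\cal L$-definable pieces.} Since $X \sub P^n$ is $B$-definable and $C$-definable in $P_{ind(D)}$ with $B, C \sub P$, Lemma \ref{general1} gives a decomposition $X = \bigcup_{i=1}^l W_i \cap S_i$, where each $W_i \sub M^n$ is both $\cal L_{BD}$-definable and $\cal L_{CD}$-definable, and each $S_i \sub P^n$ is $\es$-definable in $P_{ind(D)}$. Because $D$ is $\dcl$-independent over $P$ and $B, C \sub P$, the sets $BD$ and $CD$ have the form required by Lemma \ref{Ldef} (their set-difference with $P$ is $\dcl$-independent over $P$), so in fact each $W_i$ is genuinely $\cal L_{BD}$-definable and $\cal L_{CD}$-definable in the literal sense used by (dcl)$_D$.

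\emph{Step 2: apply (dcl)$_D$.} Each $W_i$, being simultaneously $\cal L_{BD}$-definable and $\cal L_{CD}$-definable, is $\cal L$-definable over $\dcl(BD) \cap \dcl(CD)$. By (dcl)$_D$, $\dcl(BD) \cap \dcl(CD) = \dcl(AD)$, where $A = \dcl(BD)\cap\dcl(CD)\cap P$. Hence each $W_i$ is $\cal L_{\dcl(AD)}$-definable, and since $\cal M$ has definable Skolem functions (so definability over $\dcl(AD)$ is the same as definability over $AD$), each $W_i$ is $\cal L_{AD}$-definable.

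\emph{Step 3: descend to $P_{ind(D)}$.} Now $X = \bigcup_i W_i \cap S_i$ where the $W_i$ are $\cal L_{AD}$-definable and the $S_i$ are $\es$-definable in $P_{ind(D)}$. Since $A \sub P$, Remark \ref{rmk-ind}(1) shows that each $W_i \cap S_i = S_i \cap (W_i \cap P^n)$ is $A$-definable in $P_{ind(D)}$, and therefore so is the finite union $X$. This is exactly property (*) for $P_{ind(D)}$, and Theorem \ref{main} follows by Fact \ref{criterion}. The main obstacle is Step 1 — extracting from mere induced-structure definability a decomposition into pieces that are $\cal L$-definable over the \emph{right} parameter sets $BD$ and $CD$ simultaneously — but that is precisely the content of the already-established Lemma \ref{general1}, so here the work reduces to assembling these ingredients and being careful that the "$\dcl$-independent over $P$" hypotheses line up so that Lemma \ref{Ldef} and (dcl)$_D$ are applicable.
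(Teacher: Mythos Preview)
Your approach is essentially identical to the paper's: invoke Lemma~\ref{general1}, pass to $\dcl(BD)\cap\dcl(CD)$, apply (dcl)$_D$, and descend via Remark~\ref{rmk-ind}(1). The one genuine gap is in Step~2: the assertion ``each $W_i$, being simultaneously $\cal L_{BD}$-definable and $\cal L_{CD}$-definable, is $\cal L$-definable over $\dcl(BD)\cap\dcl(CD)$'' is not automatic --- it is exactly property~(*) for the o-minimal structure $\cal M$, and the paper invokes \cite[Proposition~2.3]{pi} for it (this is where elimination of imaginaries for $\cal M$ enters). Also, the appeal to Lemma~\ref{Ldef} in Step~1 is superfluous: Lemma~\ref{general1} already hands you the $W_i$ as $\cal L_{BD}$- and $\cal L_{CD}$-definable, so nothing further is needed there.
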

\begin{proof}
Let $X\sub  P^n$ be $B$-definable and $C$-definable in $P_{ind(D)}$. By Lemma \ref{general1},  there are $W_1, \dots, W_l\sub M^n$, each both $\cal L_{BD}$-definable and $\cal L_{CD}$-definable, and $S_1, \dots, S_k\sub P^n$, each $\es$-definable in $P_{ind(D)}$, such that
$$X=\bigcup_{i=1}^l W_i\cap S_i.$$
By \cite[Proposition 2.3]{pi}, each $W_i$ is \cal L-definable over $\dcl(BD)\cap \dcl(CD)$. By (dcl)$_D$, $W_i$ is \cal L-definable over $\dcl(BD)\cap \dcl(CD)\cap PD$. Hence $X$ is definable over $\dcl(BD)\cap \dcl(CD)\cap P$ in $P_{ind(D)}$. But $$\dcl(BD)\cap \dcl(CD)\cap P=cl_D(B) \cap cl_D(C)=A,$$ and hence $X$ is $A$-definable in $P_{ind(D)}$.
\end{proof}

We can now conclude our results.

\begin{proof}[Proof of Theorem \ref{main}] By Fact \ref{criterion} and Lemma \ref{pillay2}.
\end{proof}

For the proof of Corollary \ref{embed}, we  additionally need the following lemma.

\begin{lemma}\label{preserve}
Assume  \textup{(OP} and \textup{(ind)$_D$}, and that $D$ is $\dcl$-independent over $P$. Let $\cal M'$ be the expansion of $\cal M$ with constants for all elements in $P$, and $\WM'=\la \cal M', P\ra$. Then  \textup{(ind)}$_D$ holds for $\WM'$ and $D$. 
\end{lemma}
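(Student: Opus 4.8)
The plan is to show that condition (ind)$_D$ is insensitive to adding constants from $P$ to the base o-minimal language, which is plausible because the predicate $P$ ``already sees'' those constants. Let $X\sub P^n$ be definable in $(P_{ind(D)})'$, the induced structure coming from $\WM'=\la\cal M',P\ra$. Since $\cal L'=\cal L\cup\{c_p : p\in P\}$, the set $X$ is definable in $P_{ind(D)}$ over a finite tuple $\bar p$ of parameters from $P$; that is, $X$ is $\bar p$-definable in $P_{ind(D)}$ for some $\bar p\in P^m$. I want to conclude that $X$ is of the required form, i.e.\ a finite union of traces $W_i\cap S_i$ where the $W_i$ are $\cal L'$-definable (equivalently $\cal L$-definable with constants from $P$, hence over $\es$ in $\cal M'$ once we absorb $\bar p$) and the $S_i\sub P^k$ are $\es$-definable in $(P_{ind(D)})'$.

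First I would apply Corollary \ref{ind_D} (which is available because (OP) and (ind)$_D$ hold for $\WM$ and $D$, and $D$ is $\dcl$-independent over $P$) to the $\bar p$-definable set $X\sub P^n$: this yields $\cal L_{\bar p D}$-definable sets $W_1,\dots,W_l\sub M^n$ and $\es$-definable sets $S_1,\dots,S_l\sub P^k$ in $P_{ind(D)}$ with $X=\bigcup_i(W_i\cap S_i)$. Now each $W_i$ is $\cal L_{\bar p D}$-definable in $\cal M$, hence $\cal L'_D$-definable in $\cal M'$ (the parameters $\bar p$ become constants of $\cal L'$), so the $W_i$ are $\cal L'_D$-definable sets in the ambient o-minimal structure $\cal M'$. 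Moreover each $S_i$, being $\es$-definable in $P_{ind(D)}$, is a fortiori $\es$-definable in $(P_{ind(D)})'$ since $\cal L\sub\cal L'$. Therefore $X=\bigcup_i(W_i\cap S_i)$ exhibits exactly the decomposition required by (ind)$_D$ for $\WM'$ and $D$.

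The one point that needs care is the exact wording of (ind)$_D$: it asks for the $Y_i$'s to be $\cal L$-definable (over $\es$) and the $Q_i$'s to be $\es$-definable in the induced structure. When we pass to $\WM'$, ``$\cal L'$-definable'' already permits constants from $P$, so an $\cal L_{\bar p}$-definable set over $\bar p\in P^m$ is $\cal L'$-definable over $\es$ — there is genuinely no parameter left. Thus the mild-looking shift from $\cal L$ to $\cal L'$ is precisely what lets the parameters $\bar p$ disappear, and the statement goes through. The main (and essentially only) obstacle is making sure that the $D$-side behaves correctly: since $\cal L'_D$-definable still means $\cal L'$-definable with parameters from $D$, and $D$ remains $\dcl$-independent over $P$ (adding constants from $P$ does not change $\dcl(\,\cdot\,D)$ on tuples, and certainly does not create new $\dcl$-dependencies of $D$ over $P$), Corollary \ref{ind_D} applies verbatim and nothing else must be re-checked. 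I would therefore organize the proof as: (1) reduce $X$ to a $\bar p$-definable set in $P_{ind(D)}$; (2) invoke Corollary \ref{ind_D}; (3) reinterpret the resulting $W_i$ as $\cal L'$-definable over $\es$ and the $S_i$ as $\es$-definable in $(P_{ind(D)})'$, yielding (ind)$_D$ for $\WM'$.
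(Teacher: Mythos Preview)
Your proposal is correct and follows essentially the same route as the paper: reduce an $X$ definable in $P'_{ind(D)}$ to one definable in $P_{ind(D)}$ over parameters from $P$, apply Corollary~\ref{ind_D} for $\WM$, and then reinterpret the output in $\cal M'$. One small remark: your discussion of absorbing $\bar p$ into $\cal L'$ so that the $W_i$ become $\cal L'$-definable ``over $\emptyset$'' is unnecessary, since (ind)$_D$ only asks the $Y_i$ to be $\cal L$-definable with arbitrary parameters (the paper explicitly notes there is no control on these); likewise, the worry about $D$ remaining $\dcl$-independent in $\cal M'$ is moot because Corollary~\ref{ind_D} is invoked for $\WM$, not $\WM'$.
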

\begin{proof} Denote by $P'_{ind(D)}$ the $D$-induced structure on $P$ by $\cal M'$. Let $X\sub P^n$ be $A$-definable in $P'_{ind(D)}$. It follows that $X$ is $AP$-definable in $P_{ind(D)}$.  By Corollary \ref{ind_D}, there are $\cal L_{APD}$-definable sets $Y_1, \dots, Y_l\sub M^n$, and  $Q_1, \dots, Q_l\sub P^k$,  which are $\es$-definable in $P_{ind(D)}$, such that
     $$X=\bigcup_i (Y_i \cap Q_i).$$
Such $Y_i$'s are $\cal L_{AD}$-definable in $\cal M$, and the $Q_i$'s are of course $\es$-definable in $P'_{ind(D)}$, as required.
\end{proof}

\begin{proof}[Proof of Corollary \ref{embed}]  The proof when $A$ is $\dcl$-independent over $P$ is identical to that of \cite[Theorem B]{el-Pind}. The proof of the general case is identical to that of \cite[Corollary 1.4]{el-Pind}, after replacing in \cite[Lemma 3.4]{el-Pind} the clause about (ind)$_D$ with Lemma \ref{preserve} above.
\end{proof}

We finally turn to our targeted example of Mann pairs. The proof of Corollary \ref{cor-mann} will be complete after we recall the fact below, which is extracted from \cite{beg}. First, observe that if  $\WM=\la \CM, G\ra$ is a Mann pair, then for every $d\in \N$, $G^{[d]}$ is $\es$-definable in $P_{ind(\es)}$. Indeed, $G^{[d]}$ is the projection onto the first coordinate of the set $\{(x^d, x) : x\in M\}\cap G^2.$

\begin{fact}\label{fact-beg}
Let $\WM=\la \CM, G\ra$ be a Mann pair, such that for every prime $p$, $G^{[p]}$ has finite index in $G$. Let $X\sub P^n$ a definable set. Then $X$ is a finite union of traces on sets which are $\es$-definable in $P_{ind(\es)}$. That is, \textup{(ind)$_D$} holds.
\end{fact}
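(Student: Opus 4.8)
The plan is to deduce Fact \ref{fact-beg} by unwinding the statement of \cite[Theorem~7.5]{dg} (the near model completeness for Mann pairs satisfying the finite-index hypothesis) and combining it with the structural analysis of \cite{beg}. First I would recall the precise form of near model completeness: every formula $\phi(x)$ in $\cal L(P)$ is, modulo the theory of $\WM$, equivalent to a boolean combination (indeed, by the usual argument, a finite union of formulas each of the form $\exists \bar y\, (\bar y \in P \,\wedge\, \psi(\bar x,\bar y))$ with $\psi$ an $\cal L$-formula together with finitely many membership constraints $x_i \in G^{[d_i]}$) where the existential quantifiers range over $P$ and the matrix is quantifier-free in the expanded language that names the subgroups $G^{[d]}$. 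Since we are interested in a definable $X\sub P^n$, i.e.\ the trace on $P^n$ of the solution set of such a formula, we may restrict every variable, both the displayed $x$ and the quantified $\bar y$, to lie in $P$.

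Next I would massage each disjunct into the required shape $Y_i \cap Q_i$. Fix one disjunct, given by $\exists \bar y \in P\, \bigl(\psi(\bar x,\bar y) \wedge \bigwedge_j \chi_j(\bar x, \bar y)\bigr)$, where $\psi$ is $\cal L$-definable (with parameters from $M$, which is fine since $Y_i$ is allowed parameters) and each $\chi_j$ is a membership condition of the form "$t(\bar x,\bar y)\in G^{[d]}$'' for some $\cal L$-term $t$ and $d\in\N$ — or, after the usual reductions, simply "$x_i \in G^{[d_i]}$'' and "$y_j \in G^{[e_j]}$''. The membership conditions on the $x_i$'s, being $\es$-definable in $P_{ind(\es)}$ as noted in the paragraph preceding the Fact, cut out an $\es$-definable subset of $P^n$; this will be (part of) $Q_i$. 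For the part involving $\bar y$, I would argue that $\{\bar x \in P^n : \exists \bar y\in P\, (\psi \wedge \text{constraints on }\bar y)\}$ is again of the form $Y\cap Q$: the projection along $\bar y$ of an $\cal L$-definable set is $\cal L$-definable, giving the $Y$-part, while the extra existential demand that the witness $\bar y$ satisfy its divisibility constraints and lie in $P$ is captured by an $\es$-definable relation in $P_{ind(\es)}$ on the enlarged tuple — so after projecting (which $P_{ind(\es)}$ is closed under) we again land on an $\es$-definable subset $Q$ of $P^n$. Taking the finite union over disjuncts yields the desired representation $X = \bigcup_i (Y_i\cap Q_i)$.

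The main obstacle I expect is the bookkeeping in the second step: making sure that "eliminating the quantifier $\exists \bar y \in P$'' really does split cleanly into an $\cal L$-definable projection and an $\es$-$P_{ind(\es)}$-definable constraint, rather than leaving behind a genuinely mixed condition. The point that makes this work is that in the near model completeness the only non-$\cal L$ ingredients are the predicates $P$ and $G^{[d]}$, and all of these are (uniformly, with no parameters) traces of $\cal L_\es$-definable sets on powers of $P$; so any such mixed condition on $(\bar x,\bar y)$ is itself of the form $(\cal L_\es\text{-def})\cap P^{n+m}$, and $\es$-definability in $P_{ind(\es)}$ is preserved under the projection $P^{n+m}\to P^n$ by Remark \ref{rmk-ind}. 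Once this is set up carefully, invoking Lemma \ref{disjoint} is not even needed for the bare statement, though it would be for the disjointness refinement used downstream. Finally I would remark that this is exactly the content of \cite[Theorem~3.?]{beg}, so the proof is really a matter of quoting that result and checking it matches our formulation of (ind)$_D$ — with $D = \es$ sufficing here since the Mann-pair structure brings in no auxiliary parameters.
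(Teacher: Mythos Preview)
Your final sentence is the correct move: the paper's proof simply \emph{cites} \cite[Corollary 57]{beg}, which already states that every definable $X\sub G^n$ is a finite union of traces on cosets $g(G^{[d]})^n$, and then spends its remaining two lines on the one residual issue --- showing those coset representatives $g$ can be taken in $\dcl(\es)$ (via \cite[proof of Theorem 1]{beg} and Fact~\ref{op}), so that $g(G^{[d]})^n$ is $\es$-definable in $P_{ind(\es)}$.

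Your longer argument, by contrast, tries to re-derive this from \cite[Theorem 7.5]{dg} directly, and the projection step does not work as written. Given a disjunct $\exists \bar y\in P\,\bigl(\psi(\bar x,\bar y,\bar c)\wedge \chi(\bar x,\bar y)\bigr)$ with $\psi$ carrying parameters $\bar c$, you propose $Y=\{\bar x:\exists \bar y\,\psi(\bar x,\bar y,\bar c)\}$ and $Q=$ the $P^n$-projection of the $\es$-definable constraint set. But $Y\cap Q$ is not the original set: membership in the original requires a \emph{single} witness $\bar y\in P$ satisfying both $\psi$ and $\chi$, whereas $Y\cap Q$ decouples them. Nor can you instead push everything into $Q$, because the parameters $\bar c$ prevent the combined condition on $(\bar x,\bar y)$ from being $\es$-definable in $P_{ind(\es)}$, so its projection is not $\es$-definable either. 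Eliminating the existential over $P$ here is exactly where the Mann property does real work, and that work is what \cite{beg} carries out --- it is not a matter of bookkeeping.

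You also gloss over the coset issue: the constraints arising from near model completeness are of the form $x_i\in g_iG^{[d_i]}$, not just $x_i\in G^{[d_i]}$, and one must argue (as the paper does) that the $g_i$ may be chosen $\es$-definably. So: cite \cite[Corollary 57]{beg}, then handle the $g$'s via Fact~\ref{op}; drop the attempted unwinding.
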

\begin{proof}
 By \cite[Corollary 57]{beg}, $X$ is as a finite union of traces on sets of the form  $g (G^{[d]})^n$, $d\in \N$. As pointed out in the proof of \cite[Theorem 1]{beg}, each such $g$ can be chosen to be $\es$-definable (in $\WM$). By Fact \ref{op}, $g\in \dcl(\es)$. By the above observation, $g(G^{[d]})^n$ is $\es$-definable in $P_{ind(\es)}$.
\end{proof}

\begin{proof}[Proof of Corollary \ref{cor-mann}] By Fact \ref{fact-beg}, (ind)$_D$ hold. By \cite{el-Pind}, as explained in Remark 4.11 therein,  (OP) and (dcl)$_D$ holds. By Theorem \ref{main}, we are done.
\end{proof}


A byproduct of our work is the following corollary.

\begin{cor}\label{ind_D2}
Let $\WM$ and $D$ be as in Corollary \ref{cor-mann}. Let $X\sub P^n$ be $AD$-definable, with $A\sub P$. Then $X$ is $A$-definable in $P_{ind(D)}$. In particular, the conclusion of Corollary \ref{ind_D} holds.
\end{cor}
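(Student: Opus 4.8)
The plan is to prove Corollary \ref{ind_D2} by showing that an $AD$-definable subset $X \sub P^n$ is in fact definable in $P_{ind(D)}$ over $A$, which combined with Corollary \ref{ind_D} will immediately give the ``in particular'' clause (since that corollary was stated only for sets $A$-definable in $P_{ind(D)}$, and we will now know these coincide with the $AD$-definable subsets of $P^n$). First I would recall that by Fact \ref{fact-beg}, (ind)$_D$ holds for $\WM$, and since $D$ is $\dcl$-independent over $P$, all the machinery of Section \ref{sec-main} applies; in particular the hypotheses of Corollary \ref{ind_D} are met. So the content to prove is the containment: every $AD$-definable $X \sub P^n$ is $A$-definable in $P_{ind(D)}$.

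The key step is a compactness/definability argument. Let $X \sub P^n$ be defined by an $\cal L(P)$-formula $\phi(x, a, d)$ with $a \in A^{\le \omega}$, $d \in D^{\le \omega}$. The natural move is to pass to the structure $\WM$ and use the near model completeness result behind Fact \ref{fact-beg}: every $\cal L(P)$-formula is, modulo the theory, equivalent to a boolean combination of $\cal L$-formulas and formulas of the form ``$t(x) \in G^{[d]}$'' for $\cal L$-terms $t$. Restricting to $x$ ranging over $P^n = G^n$, such a formula becomes a finite boolean combination of traces of $\cal L_{AD}$-definable sets with the $\es$-definable (in $P_{ind(\es)}$, hence in $P_{ind(D)}$) sets $G^{[d]}$ and their preimages under $\cal L$-terms. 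Each $\cal L_{AD}$-definable set $Y \sub M^n$, intersected with $P^n$, is by Remark \ref{rmk-ind}(1) (and the definition of the induced language) an $A$-definable set in $P_{ind(D)}$, since $Y \cap P^n$ is literally a relation symbol $R_{\psi}$ of $\cal L_{ind(D)}$ evaluated at the $A$-parameters. The boolean combination of $A$-definable sets in $P_{ind(D)}$ is again $A$-definable in $P_{ind(D)}$, so $X$ is $A$-definable in $P_{ind(D)}$, as desired.

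Alternatively, and perhaps more cleanly given what is already in the paper, I would argue as follows, avoiding explicit appeal to near model completeness: Corollary \ref{ind_D} plus Lemma \ref{general1} already tell us the shape of sets $A$-definable in $P_{ind(D)}$. The point is just that $P_{ind(AD)}$, the $AD$-induced structure, has the same definable subsets of $P^n$ as $P_{ind(D)}$ does with parameters from $A$. Indeed, a set $X \sub P^n$ is $AD$-definable in $\WM$ iff it is $A$-definable in $P_{ind(D)}$ in the ``one direction'' precisely when $X \sub P^n$; and $X = Y \cap P^n$ for $Y$ $\cal L_{AD}$-definable covers the basic $\cal L$-open pieces by (OP)-type reasoning, while the predicate $P$ itself contributes only traces. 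More carefully: by Remark \ref{rmk-ind}(2), $A$-definable in $P_{ind(D)}$ always implies $AD$-definable in $\WM$; for the converse, decompose any $AD$-definable $X \sub P^n$ using the structure theory for $\WM$ (the analysis underlying Fact \ref{fact-beg} applied with parameters $A \sub P$) into a finite union of traces $Y_i \cap Q_i$ with $Y_i$ $\cal L_{AD}$-definable and $Q_i$ $\es$-definable in $P_{ind(\es)}$; each such piece is $A$-definable in $P_{ind(D)}$ by Remark \ref{rmk-ind}(1), so $X$ is too.

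The main obstacle I anticipate is the first clause --- establishing that $AD$-definability of $X \sub P^n$ in $\WM$ forces $X$ to be a finite union of traces of $\cal L_{AD}$-definable sets with $\es$-definable (in $P_{ind(\es)}$) sets, \emph{with parameters $A$ kept on the $\cal L$-side}. Fact \ref{fact-beg} as stated in the excerpt gives this only for $\es$-definable (in $\WM$) sets $X$, or rather for ``definable'' sets without tracking which parameters are needed where. One must either (i) invoke the parametrized version of \cite[Corollary 57]{beg} directly, checking that parameters from $P = G$ can be absorbed into the $\cal L$-definable data and the $G^{[d]}$-traces, or (ii) reduce to the $\es$-definable case by a standard trick: replace $\cal M$ by $\cal M'$ (constants for $P$), note that $P$ is still dense with the Mann property so Fact \ref{fact-beg} applies to $\WM' = \la \cal M', P\ra$, and then translate back --- which is exactly the content packaged in Lemma \ref{preserve}. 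Indeed Lemma \ref{preserve} shows (ind)$_D$ holds for $\WM'$, and running Corollary \ref{ind_D} over $\WM'$ for the $A$-definable (in $P'_{ind(D)}$, equivalently $AP$- hence $A$-parametrized) set yields precisely the decomposition with $\cal L_{AD}$-definable $Y_i$'s; so the corollary follows from Corollary \ref{ind_D} applied in $\WM'$ together with Lemma \ref{preserve}, modulo the easy observation that an $AD$-definable subset of $P^n$ in $\WM$ is $A$-definable in $P'_{ind(D)}$.
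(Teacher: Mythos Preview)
Your identification of the main content is correct, and your approach (i) via near model completeness is essentially sound and close in spirit to what the paper does. However, your approach (ii) through Lemma~\ref{preserve} contains a genuine circularity.

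The problem is your ``easy observation that an $AD$-definable subset of $P^n$ in $\WM$ is $A$-definable in $P'_{ind(D)}$.'' This is not easy: the formula defining $X$ in $\WM$ uses the predicate $P$ and quantifiers ranging over $M$, whereas $P'_{ind(D)}$ only has relations $R_\phi$ for $\phi \in \cal L'_D$ (no predicate $P$) and quantifiers ranging over $P$. Eliminating the $M$-quantifiers in favor of $P$-quantifiers and traces is precisely the nontrivial structure theorem for Mann pairs --- it is the statement being proved, not a preliminary observation. So Corollary~\ref{ind_D} for $\WM'$ cannot be invoked until you already know $X$ lies in $P'_{ind(D)}$, and Lemma~\ref{preserve} does not help with that step.

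The paper's one-line proof ``By Corollaries~\ref{cor-mann} and~\ref{ind_D}'' should be read as follows: inspect the proof of Lemma~\ref{general1} (hence of Corollary~\ref{ind_D}). The hypothesis ``$X$ is $B$-definable in $P_{ind(D)}$'' is used only (a) to conclude $X$ is $BD$-definable in $\WM$, and (b) to invoke (ind)$_D$ for the decomposition $X = \bigcup Z_i \cap R_i$. For Mann pairs, Fact~\ref{fact-beg} gives this decomposition for \emph{every} definable $X \sub P^n$, not just those definable in $P_{ind(D)}$; and the inductive step only needs $BD$-definability in $\WM$, which is preserved. So with $B = C = A$ and $X$ merely $AD$-definable in $\WM$, the same proof yields $X = \bigcup W_i \cap S_i$ with each $W_i$ $\cal L_{AD}$-definable and each $S_i$ $\es$-definable in $P_{ind(D)}$; by Remark~\ref{rmk-ind}(1), $X$ is then $A$-definable in $P_{ind(D)}$. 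This is morally your option (i), but it controls the parameters of the $Y_i$'s via (OP) and the topology-based argument of Lemma~\ref{general1}, rather than by appealing to a parametrized form of \cite[Corollary~57]{beg}.
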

\begin{proof}
By Corollaries \ref{cor-mann} and \ref{ind_D}.
\end{proof}

\end{document}